\documentclass[12 pt]{amsart}

\usepackage{amssymb, enumitem, mathtools}

\makeatletter
\@namedef{subjclassname@2020}{
  \textup{2020} Mathematics Subject Classification}
\makeatother

\usepackage[T1]{fontenc}

\newtheorem{theorem}{Theorem}[section]
\newtheorem{proposition}[theorem]{Proposition}
\newtheorem{corollary}[theorem]{Corollary}
\newtheorem{lemma}[theorem]{Lemma}

\theoremstyle{definition}
\newtheorem{definition}[theorem]{Definition}
\newtheorem{remark}[theorem]{Remark}

\numberwithin{equation}{section}

\frenchspacing

\textwidth=13.5cm
\textheight=23cm
\parindent=16pt
\oddsidemargin=1.5cm
\evensidemargin=1.5cm
\topmargin=-0.5cm
       
\newcommand{\conjC}{\mathbb{C}}
\newcommand{\conjR}{\mathbb{R}}
\newcommand{\conjK}{\mathbb{K}}
\newcommand{\conjT}{\mathbb{T}}
\newcommand{\conjN}{\mathbb{N}}

\newcommand{\cconv}{\overline{\conv}\,}
\DeclareMathOperator{\re}{Re }
\DeclareMathOperator{\conv}{conv }
\DeclareMathOperator{\id}{Id}

\begin{document}

\baselineskip=17pt

\title[]{Polynomial numerical index with respect to a norm-one polynomial}

\author[F. J. Bertoloto]{F\'abio J. Bertoloto}
\address{Institute of Mathematics and Statistics\\ Federal University of Uberl\^andia\\
38.400-902\\ Uberl\^andia -- MG, Brazil}
\email{bertoloto@ufu.br}

\author[E. R. Santos]{Elisa R. Santos}
\address{Institute of Mathematics and Statistics\\ Federal University of Uberl\^andia\\
38.400-902\\ Uberl\^andia -- MG, Brazil}
\email{elisars@ufu.br}

\date{}

\begin{abstract}
In this paper, we introduce the \textit{polynomial numerical index of a pair of Banach spaces with respect to a norm-one polynomial}. This index generalizes the concept of polynomial numerical index defined by Y. S. Choi et al. in 2006 and extends to polynomials the notion of numerical index with respect to an operator, introduced by M. A. Ardalani in 2014. We investigate several properties of this index and present examples.
\end{abstract}

\subjclass[2020]{46G25, 47A12, 46B20}

\keywords{Polynomial numerical index, Numerical index with respect to an operator, Numerical range, Numerical radius}

\maketitle

\section{Introduction and preliminaries} 

Let $X$ and $Y$ be Banach spaces over $\conjK$, where $\conjK$ is either $\conjR$ or $\conjC$. We denote by $X^*$ the topological dual of $X$, by $B_X$ the closed unit ball of $X$, and by $S_X$ the unit sphere of $X$. The Banach space of all bounded linear operators from $X$ into $Y$ is denoted by $\mathcal{L}(X,Y)$. In the case where $X = Y$, we simply write $\mathcal{L}(X)$ for $\mathcal{L}(X,X)$.

Given a subset $C$ of $X$, we denote by $\conv C$ or $\conv(C)$ the \textit{convex hull} of $C$, and by $\cconv C$ or $\cconv (C)$ the \textit{closed convex hull} of $C$. We say that $C \subseteq B_{X^*}$ is \textit{norming} for $X$ if $\|x\| = \sup_{\varphi \in C} \re \varphi(x)$ for every $x \in X$, or equivalently, if $B_{X^*} = \cconv^{w^*}(C)$. 

If $k$ is a positive integer, a mapping $P: X \rightarrow Y$ is called a \textit{$k$-homogeneous polynomial} if there exists a $k$-linear mapping $A: X^k \rightarrow Y$ such that $P(x) = A(x, \dots, x)$ for every $x \in X$. We denote by $\mathcal{P}(^k X, Y)$ the vector space of all continuous $k$-homogeneous polynomials from $X$ into $Y$, and by $\mathcal{P}(^k X)$ the space of all continuous scalar-valued $k$-homogeneous polynomials. These spaces are Banach spaces equipped with the norm
$$\|P\| = \sup_{x \in B_X} \|P(x)\|.$$
For background on polynomials on Banach spaces, the reader is referred to \cite{D99, M10}.

Given a non-empty set $\Gamma$, we write $\ell_\infty(\Gamma, Y)$ to denote the Banach space of all bounded functions from $\Gamma$ into $Y$, endowed with the supremum norm.

The concept of \textit{numerical range} on a Hilbert space was first introduced by O. Toeplitz \cite{T18} in 1918 for matrices. In 1961, G. Lumer \cite{L61} extended the theory of the numerical range to bounded linear operators on semi-inner-product spaces. Under the name \textit{field of values}, F. L. Bauer \cite{B62} generalized  the concept to linear operators on finite-dimensional normed linear spaces. F. F. Bonsall et al. \cite{BCS68} further extended the classical notion of numerical range to continuous (not necessarily linear) mappings, thereby broadening its applicability beyond linear operators. Motivated by Lumer's work, L. Harris \cite{H71} generalized the numerical range to holomorphic functions mapping the open unit ball of a Banach space into itself.

In 1968, the concept of numerical index of a Banach space was proposed by Lumer during a lecture to the North British Functional Analysis Seminar. However, it was only in 1970 that J. Duncan et al. \cite{DMPW70} presented a thorough study of the \textit{numerical index of a Banach space}, defined as
\[
n(X) = \inf\{v(T) : T \in \mathcal{L}(X,X), \|T\| = 1\},
\]
where 
$
v(T) = \sup\{|x^*(T(x))| : x \in S_X, x^* \in S_{X^*}, x^*(x) = 1\}
$
is called the \textit{numerical radius} of $T$. In that study, the authors showed that the extreme case $n(X) = 1$ occurs for a large class of significant spaces, including all real or complex $L$-spaces and $M$-spaces.

In the 1970s, F. F. Bonsall and J. Duncan \cite{BD71, BD73} developed an extensive theory of the numerical range and radius in Banach spaces, extending classical results. They broadened the concept to bounded linear operators on normed spaces and to elements of normed algebras, thereby moving beyond the scope of finite-dimensional matrix theory. These contributions advanced the understanding of the numerical index in more specific Banach spaces, as evidenced in \cite{BKMW07} and \cite{MP00}.

More recently, there was a growing interest in further exploring the numerical range and radius of homogeneous polynomials on Banach spaces. S. G. Kim et al. \cite{CGKM06} introduced the concept of the \textit{polynomial numerical index of order $k$}. Given a Banach space $X$ and $k \in \mathbb{N}$, the polynomial numerical index of order $k$ of $X$ is defined as
\[
n^{(k)}(X) = \inf\{v(P) : P \in \mathcal{P}(^k X, X), \|P\| = 1\},
\]
where
\[
v(P) = \sup\{|x^*(P(x))| : x \in S_X, x^* \in S_{X^*}, x^*(x) = 1\}
\]
is the \textit{numerical radius} of the polynomial \( P \). Further results on the polynomial numerical index are discussed in \cite{GM14}.

The concepts of numerical range, numerical radius, and numerical index, defined with respect to an operator \( G \in \mathcal{L}(X, Y) \) rather than the identity operator, were first introduced by M. A. Ardalani \cite{A14}. In this framework, given \( T, G \in \mathcal{L}(X, Y) \) with \( \|G\| = 1 \), the \emph{numerical range of \( T \) with respect to \( G \)}, denoted by \( V_G(T) \), is defined as
\[
V_G(T) = \bigcap_{\delta >0} \left\{ x^*(Tx) : x \in S_X,\, x^* \in S_{Y^*},\, \operatorname{Re} x^*(Gx) > 1 - \varepsilon \right\}.
\]
The \emph{numerical radius of \( T \) with respect to \( G \)}, denoted by \( v_G(T) \), is then given by
\[
v_G(T) = \sup\{ |t| : t \in V_G(T) \}.
\]
Finally, the \emph{numerical index of the pair \( (X, Y) \) with respect to \( G \)}, denoted by \( n_G(X, Y) \), is defined as
\[
n_G(X,Y) = \inf\{ v_G(T) : T \in \mathcal{L}(X,Y),\ \|T\| = 1 \}.
\]

In this article, we introduce the notion of the polynomial numerical index of a pair \((X, Y)\) with respect to a norm-one polynomial \(Q \in \mathcal{P}(X, Y)\). We develop the concepts of the intrinsic numerical range and the approximated spatial numerical range associated with homogeneous polynomials, establishing several structural properties and characterizations. A connection between these ranges is described via convex hull operations. Moreover, we present various properties of the introduced index. Finally, we study the stability of the polynomial numerical index with respect to a fixed polynomial under $c_0$-, $\ell_\infty$-, and $\ell_1$-sums.

\section{Definitions and preliminaries results}

It is possible to define numerical ranges of functions with respect to a fixed function, as it is done in \cite{M16}. In this paper, we consider the cases of numerical ranges of homogeneous polynomials with respect to a fixed homogeneous polynomial. We now present the definitions of these numerical ranges in the format adopted throughout this work.

The first type of numerical range we introduce has appeared under different names and notations since the 1960s \cite{M16}.

\begin{definition}
     Let $X, Y$ be Banach spaces, and let $Q \in \mathcal{P}(^k X, Y)$ with $\|Q\|=1$.
     For every $P \in \mathcal{P}(^k X, Y)$, the \textit{intrinsic numerical range of $P$ with respect to $Q$} is the set
    $$V(\mathcal{P}(^k X, Y), Q, P) := \{\phi(P) : \phi \in \mathcal{P}(^k X, Y)^*, \|\phi\| = \phi(Q) = 1\}.$$
\end{definition}

In 2014, Ardalani \cite{A14} introduced the concept of approximated spatial numerical range for operators. This concept was later extended to bounded functions by Mart\'in \cite{M16} in a natural way. Here, we present the formulation for homogeneous polynomials.

\begin{definition}
    Let $X, Y$ be Banach spaces, and let $Q \in \mathcal{P}(^k X, Y)$ with $\|Q\|=1$.
    For every $P \in \mathcal{P}(^k X, Y)$, the \textit{approximated spatial numerical range of $P$ with respect to $Q$} is the set
    $$V_Q(P) := \bigcap_{\delta > 0} \overline{\{y^*(P(x)) : y^* \in S_{Y^*}, \ x \in S_X, \ \re y^*(Q(x))>1-\delta\}}.$$
\end{definition}

The relationship between the intrinsic and the approximated spatial numerical ranges of homogeneous polynomials with respect to a fixed homogeneous polynomial is presented in the following result.

\begin{proposition}\label{Prop1}
    Let $X, Y$ be Banach spaces, and let $Q \in \mathcal{P}(^k X, Y)$ with $\|Q\|=1$.
    For every $P \in \mathcal{P}(^k X, Y)$, we have
    $$V(\mathcal{P}(^k X, Y), Q, P) = \conv V_Q(P).$$
\end{proposition}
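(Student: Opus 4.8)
The plan is to regard $E := \mathcal{P}(^k X, Y)$ as the ambient Banach space and $Q$ as a distinguished norm-one element, so that the intrinsic numerical range is $V(E, Q, P) = \{\phi(P) : \phi \in D\}$, where $D := \{\phi \in E^* : \|\phi\| = \phi(Q) = 1\}$. Since $D$ is a nonempty (by Hahn--Banach) $w^*$-closed convex subset of $B_{E^*}$ and the evaluation $\phi \mapsto \phi(P)$ is $w^*$-continuous and linear, $V(E,Q,P)$ is a compact convex subset of $\conjK$. Moreover $V_Q(P)$ is a nonempty compact subset of $\conjK$: each of the sets whose closure is intersected is nonempty (choose $x \in S_X$ with $\|Q(x)\|$ near $1$, using $k$-homogeneity, and a norming $y^* \in S_{Y^*}$) and bounded by $\|P\|$, and the family is decreasing in $\delta$, so the intersection of these nonempty compacta is nonempty. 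Because $\conjK$ has real dimension at most $2$, $\conv V_Q(P)$ is itself compact, so no closure is needed on the right-hand side.

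For the inclusion $\conv V_Q(P) \subseteq V(E,Q,P)$, since the right-hand side is convex it suffices to show $V_Q(P) \subseteq V(E,Q,P)$. Given $\lambda \in V_Q(P)$, for each $n$ I would pick $x_n \in S_X$ and $y_n^* \in S_{Y^*}$ with $\re y_n^*(Q(x_n)) > 1 - 1/n$ and $|y_n^*(P(x_n)) - \lambda| < 1/n$, and define $\phi_n \in B_{E^*}$ by $\phi_n(R) = y_n^*(R(x_n))$. A $w^*$-cluster point $\phi \in B_{E^*}$ of $(\phi_n)$ satisfies $\phi(Q) = 1$ (because $|\phi_n(Q)| \le 1$ and $\re \phi_n(Q) \to 1$ force $\phi_n(Q) \to 1$), hence $\|\phi\| = 1$ and $\phi \in D$, while $\phi(P) = \lambda$; thus $\lambda \in V(E,Q,P)$.

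The reverse inclusion $V(E,Q,P) \subseteq \conv V_Q(P)$ I would obtain by comparing support functions, using that both sets are compact and convex: it is enough to check that for every $\beta \in \conjK$ with $|\beta|=1$,
\[
\sup_{z \in V(E,Q,P)} \re(\overline{\beta}\, z) \ \le\ \sup_{z \in V_Q(P)} \re(\overline{\beta}\, z).
\]
Writing $R = \overline{\beta} P$ and using $V(E,Q,\overline\beta P) = \overline\beta\, V(E,Q,P)$ and $V_Q(\overline\beta P) = \overline\beta\, V_Q(P)$, this reduces to proving $\max \re V(E,Q,R) \le \max \re V_Q(R)$ for every $R \in E$. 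Set $D_R := \inf_{t>0} \frac{\|Q+tR\|-1}{t}$. For any $\phi \in D$ and $t>0$ one has $\re\phi(R) = \frac{\re\phi(Q+tR)-1}{t} \le \frac{\|Q+tR\|-1}{t}$, so $\max \re V(E,Q,R) \le D_R$. For the matching lower bound on $\max\re V_Q(R)$, I would use that $Q+tR$ is $k$-homogeneous to write $\|Q+tR\| = \sup\{\re y^*((Q+tR)(x)) : x \in S_X,\ y^*\in S_{Y^*}\}$, and for each small $t$ choose near-maximizers $x_t, y_t^*$ with $\re y_t^*((Q+tR)(x_t)) > \|Q+tR\| - t^2$. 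Since $\re y_t^*(Q(x_t)) \le 1$ and $\|Q+tR\| \ge 1 + tD_R$, dividing by $t$ gives $\re y_t^*(R(x_t)) > D_R - t$, and the same inequalities force $\re y_t^*(Q(x_t)) \to 1$ as $t \to 0^+$. Extracting from the bounded sequence $(y_{1/n}^*(R(x_{1/n})))$ a convergent subsequence, its limit lies in $V_Q(R)$ and has real part $\ge D_R$, which yields $\max\re V_Q(R) \ge D_R \ge \max\re V(E,Q,R)$ and completes the proof.

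I expect the main obstacle to be this last construction: extracting from the approximate norm-attaining pairs $(x_t, y_t^*)$ of $\|Q+tR\|$ a single element of the spatial numerical range whose real part reaches $D_R$, while simultaneously controlling $\re y_t^*(Q(x_t)) \to 1$ so that the limit genuinely survives the intersection over all $\delta > 0$. The $w^*$-cluster point argument in the first inclusion and the reduction to support functions are routine once compactness and convexity are in place.
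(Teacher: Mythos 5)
Your proof is correct, but it takes a genuinely different route from the paper. The paper's proof is a two-step reduction: it observes that $\mathcal{P}(^k X, Y)$ is a closed subspace of $\ell_\infty(S_X, Y)$ containing $Q$ and $P$, uses the Hahn--Banach theorem to identify the intrinsic numerical range computed with functionals on $\mathcal{P}(^k X, Y)$ with the one computed with functionals on $\ell_\infty(S_X, Y)$, and then invokes Mart\'in's result \cite[Theorem 2.1]{M16} for bounded vector-valued functions, which gives the equality at once. You instead prove the statement from scratch: $w^*$-compactness of the state set $D$ and finite-dimensionality of $\conjK$ give that both sides are compact (and the left side convex); the inclusion $\conv V_Q(P) \subseteq V(\mathcal{P}(^kX,Y),Q,P)$ follows from your $w^*$-cluster-point argument; and the reverse inclusion follows from a support-function comparison, where the key estimate is that both $\max \re V(\mathcal{P}(^kX,Y),Q,R)$ and $\max \re V_Q(R)$ are pinned to the one-sided derivative $\inf_{t>0} \frac{\|Q+tR\|-1}{t}$ --- the upper bound for intrinsic-range elements being immediate, and the lower bound for the spatial range coming from your near-maximizer construction with error $t^2$, which simultaneously forces $\re y_t^*(R(x_t)) > D_R - t$ and $\re y_t^*(Q(x_t)) \to 1$. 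All steps check out (in particular the self-improvement $|\phi_n(Q)|\le 1$, $\re\phi_n(Q)\to 1$ $\Rightarrow$ $\phi_n(Q)\to 1$, and the fact that your limit point genuinely lies in the intersection over all $\delta>0$). In effect you have reproved Mart\'in's theorem in the polynomial setting; as a byproduct your identity $\max\re V_Q(R) = \inf_{t>0}\frac{\|Q+tR\|-1}{t}$ essentially contains Proposition \ref{Lemma3.2}, which the paper later imports from \cite[Lemma 2.2]{KMMPQ20}. What each approach buys: the paper's proof is shorter and delegates the analytic work to the literature, while yours is self-contained, works verbatim for any subspace of $\ell_\infty(\Gamma,Y)$ in place of $\mathcal{P}(^kX,Y)$ (homogeneity is only used to pass between $S_X$ and $B_X$ and to produce points where $\|Q(x)\|$ is near $1$), and makes the connection between the numerical radius and the derivative of the norm explicit rather than implicit.
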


\begin{proof}
    Note that $\mathcal{P}(^k X, Y)$ is a subspace of $\ell_\infty(S_X, Y)$ that contains both $Q$ and $P$. Thus, by the Hahn–Banach theorem,
    \begin{align*}
    \{\phi(P) : \phi \in \mathcal{P}(^k X, Y)^*, & \ \|\phi\| = \phi(Q) = 1\}\\
    &= \{\varphi(P) : \varphi \in \ell_\infty(S_X, Y)^*, \|\varphi\| = \varphi(Q) = 1\}.
    \end{align*}
    Consider $\Gamma = S_X$, $g = Q$, and $f=P$. Then 
    \begin{align*}
        \bigcap_{\varepsilon > 0} & \overline{\{y^*(f(t)) : y^* \in S_{Y^*}, \ t \in \Gamma, \ \re y^*(g(t))>1-\varepsilon\}} \\
        & \hspace{1.8cm} = \bigcap_{\delta > 0} \overline{\{y^*(P(x)) : y^* \in S_{Y^*}, \ x \in S_X, \ \re y^*(Q(x))>1-\delta\}}.
    \end{align*}
    Therefore, by \cite[Theorem 2.1]{M16}, we conclude that
    $$V(\mathcal{P}(^k X, Y), Q, P) = \conv V_Q(P).$$
\end{proof}

Next, we express the intrinsic numerical range of homogeneous polynomials with respect to a fixed homogeneous polynomial as the convex hull of an alternative intersection, inspired by \cite[Lemma 3.4]{KMMPQ20}.

\begin{proposition}\label{Lemma3.4}
Let $X, Y$ be Banach spaces. Suppose that $A \subseteq B_X$ and $B \subseteq B_{Y^*}$ satisfy
$$
\|p\| = \sup_{x \in A} \re p(x)
$$
for all $p \in \mathcal{P}(^k X)$, and
$$
\cconv^{w^*}(B) = B_{Y^*}.
$$
Then, for every $P, Q \in \mathcal{P}(^k X, Y)$ with $\|Q\|=1$, we have
\begin{align*}
V(P(^k X, Y), & \ Q, P) \\
& = \conv \bigcap_{\delta > 0} \overline{\{ y^*(P(x)) : y^* \in B, \ x \in A, \ \re y^*(Q(x)) > 1 - \delta \}}.
\end{align*}
\end{proposition}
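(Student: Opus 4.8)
The plan is to treat both sides as compact convex subsets of $\conjK$ and prove their equality by matching support functions, i.e.\ by matching the maximum of the real part in every direction. Abbreviate
$$W_Q(P) := \bigcap_{\delta > 0} \overline{\{ y^*(P(x)) : y^* \in B,\ x \in A,\ \re y^*(Q(x)) > 1 - \delta \}}.$$
By Proposition~\ref{Prop1}, $V(\mathcal{P}(^kX,Y),Q,P) = \conv V_Q(P)$ is compact and convex, and $\conv W_Q(P)$ is compact and convex because $W_Q(P)$ is closed and bounded in the finite-dimensional space $\conjK$. Since for a fixed scalar $\theta$ one checks directly that $W_Q(\theta P) = \theta\,W_Q(P)$ and $V(\mathcal{P}(^kX,Y),Q,\theta P) = \theta\,V(\mathcal{P}(^kX,Y),Q,P)$, it suffices to establish, for each $R \in \mathcal{P}(^kX,Y)$, the single scalar identity $\max\re V(\mathcal{P}(^kX,Y),Q,R) = \sup\re W_Q(R)$, and then apply it to $R = \theta P$ as $\theta$ runs over the scalars of modulus one.

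One inequality is easy: I would first show $W_Q(R) \subseteq V(\mathcal{P}(^kX,Y),Q,R)$, which gives $\sup\re W_Q(R) \le \max\re V$. Given $\lambda \in W_Q(R)$, for each $\delta>0$ pick $x_\delta \in A$ and $y^*_\delta \in B$ with $\re y^*_\delta(Q(x_\delta)) > 1-\delta$ and $|y^*_\delta(R(x_\delta)) - \lambda| < \delta$, and set $\phi_\delta(S) := y^*_\delta(S(x_\delta))$, so that $\phi_\delta \in \mathcal{P}(^kX,Y)^*$ with $\|\phi_\delta\| \le 1$. A weak$^*$-cluster point $\phi$ of the net $(\phi_\delta)$ as $\delta \to 0^+$ satisfies $\phi(Q)=1$ (because $\re\phi_\delta(Q) > 1-\delta$ while $|\phi_\delta(Q)| \le 1$), hence $\|\phi\|=1$, and $\phi(R)=\lambda$; thus $\lambda \in V(\mathcal{P}(^kX,Y),Q,R)$.

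The heart of the argument is the reverse inequality. The key tool is the norming identity $\|S\| = \sup\{\re y^*(S(x)) : x \in A,\ y^* \in B\}$, valid for every $S \in \mathcal{P}(^kX,Y)$: since $B$ is norming for $Y$ we have $\sup_{y^*\in B}\re y^*(z)=\|z\|$ for all $z\in Y$, while for each fixed $y^*$ the scalar polynomial $y^*\circ S \in \mathcal{P}(^kX)$ satisfies $\|y^*\circ S\| = \sup_{x\in A}\re y^*(S(x))$ by hypothesis, and interchanging the two suprema yields the identity. Applying it to $S = Q + \alpha R$ gives
$$\frac{\|Q+\alpha R\|-1}{\alpha} = \sup_{x\in A,\ y^*\in B}\Big(\re y^*(R(x)) + \frac{\re y^*(Q(x))-1}{\alpha}\Big).$$
Splitting this supremum according to whether $\re y^*(Q(x)) > 1-\delta$ or not, and bounding $\re y^*(R(x))$ by $\|R\|$ on the complementary part, I would deduce $\limsup_{\alpha\to0^+}\tfrac{\|Q+\alpha R\|-1}{\alpha} \le g(\delta)$ for each $\delta$, where $g(\delta)$ is the supremum of $\re y^*(R(x))$ over the level set at $\delta$. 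A compactness argument on the nested closed sets then identifies $\inf_{\delta>0}g(\delta)$ with $\sup\re W_Q(R)$, so the directional limit is $\le \sup\re W_Q(R)$.

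To finish I would invoke the classical fact that the one-sided directional derivative of the norm computes the maximum of the real part of the intrinsic numerical range, namely $\max\re V(\mathcal{P}(^kX,Y),Q,R) = \lim_{\alpha\to0^+}\tfrac{\|Q+\alpha R\|-1}{\alpha}$ (provable by a weak$^*$-cluster-point argument on norming functionals of $Q+\alpha R$). Combined with the previous paragraph this gives $\max\re V \le \sup\re W_Q(R)$, and together with the easy inclusion yields the scalar identity. Ranging $R = \theta P$ over $|\theta|=1$ then shows the two compact convex sets $V(\mathcal{P}(^kX,Y),Q,P)$ and $\conv W_Q(P)$ have identical support functions, hence coincide. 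I expect the delicate point to be the interplay between $\alpha$ and $\delta$ in the third paragraph: controlling the ``far'' part of the supremum so that its contribution disappears as $\alpha\to0^+$, and verifying that the intersection over $\delta$ of the closures is faithfully captured by $\inf_{\delta}g(\delta)$, which is exactly where compactness of the $\conjK$-valued level sets enters.
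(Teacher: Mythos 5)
Your proof is correct, but it follows a genuinely different route from the paper's after the opening step. The common core is your norming identity $\|S\|=\sup\{\re y^*(S(x)) : x\in A,\ y^*\in B\}$ for every $S\in\mathcal{P}(^kX,Y)$: the paper performs exactly this double-supremum computation, phrased as the statement that the functionals $x\otimes y^*\colon S\mapsto y^*(S(x))$, $x\in A$, $y^*\in B$, form a norming set, i.e. $B_{\mathcal{P}(^kX,Y)^*}=\cconv^{w^*}(A\otimes B)$. At that point the paper stops: it concludes by citing the abstract result \cite[Proposition 2.14]{KMMPQ20}, applied to the Banach space $Z=\mathcal{P}(^kX,Y)$, the norming subset $A\otimes B\subseteq B_{Z^*}$, and the unit vector $Q$. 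You instead reprove that abstract result in the present setting: reduction to support functions of two compact convex subsets of $\conjK$, the inclusion of your set $W_Q(R)$ into $V(\mathcal{P}(^kX,Y),Q,R)$ via weak$^*$-cluster points, and the reverse estimate obtained by splitting the supremum in $(\|Q+\alpha R\|-1)/\alpha$ at the level $1-\delta$ and invoking the classical Bonsall--Duncan identity $\max\re V(\mathcal{P}(^kX,Y),Q,R)=\lim_{\alpha\to 0^+}(\|Q+\alpha R\|-1)/\alpha$. What each approach buys: the paper's proof is short but hides the $\alpha$--$\delta$ interplay (which you rightly flag as the delicate point) inside the citation, while yours is self-contained, makes visible where each hypothesis on $A$ and $B$ enters, and in effect reconstructs the proof of the cited proposition. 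Two details to make explicit in a full write-up: the level sets $\{(x,y^*)\in A\times B:\re y^*(Q(x))>1-\delta\}$ are nonempty (apply your norming identity to $S=Q$ and use $\|Q\|=1$), which your nested-compacts argument requires; and the interchange of suprema in the norming identity should pass through $\sup_{y^*\in B}|y^*(z)|=\|z\|$, which follows from $\re y^*(z)\le|y^*(z)|\le\|z\|$, as in the paper's own computation.
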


\begin{proof}
Fix $Q \in \mathcal{P}(^k X, Y)$ with $\|Q\|=1$. For \( x \in A \) and \( y^* \in B \), consider the functional \( x \otimes y^* \in \mathcal{P}(^k X,Y)^* \) given by
$$
x \otimes y^*(P) = y^*(P(x)).
$$
Define
$$
A \otimes B = \{ x \otimes y^* : x \in A, \, y^* \in B \}.
$$
Then \( B_{\mathcal{P}(^k X, Y)^*} = \cconv^{w^*} (A \otimes B) \). Indeed, for any \( P \in \mathcal{P}(^k X, Y) \), we have
\begin{align*}
\|P\| &= \sup_{x \in B_X} \|P(x)\|= \sup_{x \in B_X} \sup_{y^* \in B} | y^*(P(x)) | \\
 &= \sup_{y^* \in B} \sup_{x \in B_X} | y^*(P(x)) | = \sup_{y^* \in B} \| y^* \circ P \| \\ 
&= \sup_{y^* \in B} \sup_{x \in A} \re \, (y^* \circ P)(x) = \sup_{x \otimes y^* \in A \otimes B} \re x \otimes y^* (P),
\end{align*}
where the second and fifth equalities follow from the assumptions $\cconv^{w^*}(B)$ $= B_{Y^*}$ and $\|p\| = \sup_{x \in A} \re p(x)$ for all $p \in \mathcal{P}(^k X)$, respectively. Thus, $A \otimes B$ is norming for $\mathcal{P}(^kX, Y)$. Hence
$$
B_{\mathcal{P}(^k X, Y)^*} = \cconv^{w^*} (A \otimes B).
$$
The result follows from \cite[Proposition 2.14]{KMMPQ20}.
\end{proof}

\begin{remark}
    Let \( X \) be a complex Banach space, let \( p \in \mathcal{P}(^kX) \), and let \( A = S_X\). It is not difficult to see that
$$
\| p \| = \sup_{x \in A} \re p(x).
$$ Therefore, Proposition \ref{Prop1} can be viewed as a consequence of Proposition \ref{Lemma3.4} in the complex case.
\end{remark}

The next proposition provides a simplified characterization of the approximated spatial numerical range of homogeneous polynomials with respect to a fixed homogeneous polynomial when the domain is finite-dimen- sional.

\begin{proposition}\label{Prop3.4} 
Let \( X \) be a finite-dimensional Banach space, let \( Y \) be an arbitrary Banach space, and let \( P, Q \in \mathcal{P}(^{k}X, Y) \) with $\|Q\|=1$. Then
$$
V_Q(P) = \{ y^*(P(x)) : x \in S_X, \ y^* \in S_{Y^*}, \ y^*(Q(x)) = 1 \}.
$$
\end{proposition}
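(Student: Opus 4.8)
The plan is to establish the two inclusions separately, with essentially all of the content residing in
\[
V_Q(P) \subseteq \{ y^*(P(x)) : x \in S_X,\ y^* \in S_{Y^*},\ y^*(Q(x)) = 1 \}.
\]
The reverse inclusion requires no hypothesis on $X$: if $x \in S_X$ and $y^* \in S_{Y^*}$ satisfy $y^*(Q(x)) = 1$, then $\re y^*(Q(x)) = 1 > 1 - \delta$ for every $\delta > 0$, so $y^*(P(x))$ lies in each of the sets whose closures are intersected in the definition of $V_Q(P)$, and hence in $V_Q(P)$ itself.

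For the hard inclusion I would fix $t \in V_Q(P)$ and, taking $\delta = 1/n$ and using that $t$ lies in the closure of the $n$-th set, produce $x_n \in S_X$ and $y_n^* \in S_{Y^*}$ with $\re y_n^*(Q(x_n)) > 1 - 1/n$ and $|y_n^*(P(x_n)) - t| < 1/n$. This is exactly where finite-dimensionality enters: $S_X$ is norm-compact, so after passing to a subsequence I may assume $x_n \to x_0 \in S_X$. Since $P$ and $Q$ are continuous, $P(x_n) \to P(x_0)$ and $Q(x_n) \to Q(x_0)$ in $Y$, and because $\|y_n^*\| = 1$ these norm convergences let me transfer the functionals to the fixed point $x_0$ up to a vanishing error, giving $y_n^*(P(x_0)) \to t$; likewise, from $1 - 1/n < \re y_n^*(Q(x_n)) \le \|Q(x_n)\| \le 1$ and $\|Q(x_n) - Q(x_0)\| \to 0$ I get $\re y_n^*(Q(x_0)) \to 1$.

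It then remains to pass to a limit functional. Rather than extracting a weak-$*$ limit of $(y_n^*)$, which need not be sequentially available when $Y$ is nonseparable, I would push forward to $\conjK^2$ via the map $\Phi : B_{Y^*} \to \conjK^2$ defined by $\Phi(y^*) = (y^*(P(x_0)), y^*(Q(x_0)))$. This map is weak-$*$ continuous, so by the Banach--Alaoglu theorem its image $\Phi(B_{Y^*})$ is a compact, hence closed, subset of $\conjK^2$. Now $\Phi(y_n^*) \to (t,1)$: the first coordinate tends to $t$ by the previous step, while for the second coordinate $\re y_n^*(Q(x_0)) \to 1$ together with $|y_n^*(Q(x_0))| \le \|Q(x_0)\| \le 1$ forces $y_n^*(Q(x_0)) \to 1$. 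Closedness of $\Phi(B_{Y^*})$ then produces $y_0^* \in B_{Y^*}$ with $y_0^*(P(x_0)) = t$ and $y_0^*(Q(x_0)) = 1$. Finally $1 = |y_0^*(Q(x_0))| \le \|y_0^*\|\,\|Q(x_0)\| \le 1$ forces $\|y_0^*\| = 1$, so $y_0^* \in S_{Y^*}$ and $t = y_0^*(P(x_0))$ has the required form.

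The step I expect to be the main obstacle is this limiting argument for the functionals: since $Y$ is arbitrary, $(y_n^*)$ may admit no weak-$*$ convergent subsequence and one cannot invoke metrizability of $B_{Y^*}$. The device of evaluating on the two fixed vectors $P(x_0), Q(x_0) \in Y$ \emph{before} taking limits circumvents this, reducing everything to closedness of a compact subset of $\conjK^2$; the surrounding estimates (the continuity transfer of $P$ and $Q$ to $x_0$, and the normalization $\|y_0^*\| = 1$) are then routine.
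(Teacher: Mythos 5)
Your proof is correct, but it takes a different route from the paper, whose entire proof is a one-line citation: since $S_X$ is compact and $Q$ is a norm-one continuous function, the statement follows directly from \cite[Proposition 3.4]{M16}, a general result on approximated spatial numerical ranges of continuous functions on compact sets. You instead reprove this special case from scratch, and the argument holds up. In particular, your way of producing the limit functional is the right fix for a genuine issue: $B_{Y^*}$ is weak-$*$ compact but in general not weak-$*$ sequentially compact, so one cannot simply extract a weak-$*$ convergent subsequence of $(y_n^*)$; the standard remedy is a weak-$*$ cluster point (subnet), evaluated at the two fixed vectors $P(x_0)$ and $Q(x_0)$, and your map $\Phi(y^*) = \bigl(y^*(P(x_0)), y^*(Q(x_0))\bigr)$ accomplishes the same thing with sequences only, via closedness of the compact set $\Phi(B_{Y^*}) \subseteq \mathbb{K}^2$. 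The remaining steps --- the transfer of $y_n^*$ from $x_n$ to $x_0$ by norm continuity of $P$ and $Q$, the implication that $\re z_n \to 1$ together with $|z_n| \le 1$ forces $z_n \to 1$, and the normalization $\|y_0^*\| = 1$ forced by $y_0^*(Q(x_0)) = 1$ --- are all sound. What the paper's citation buys is brevity and placement of the proposition within Mart\'in's general framework; what your argument buys is self-containedness and an explicit display of exactly where finite-dimensionality (norm compactness of $S_X$) and weak-$*$ compactness enter.
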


\begin{proof} Since \( S_X \) is compact and \( Q \) is a norm-one continuous function, the result follows directly from \cite[Proposition 3.4]{M16}.
\end{proof}

Now, for $P, Q \in \mathcal{P}(^k X, Y)$ with $\|Q\|=1$, consider
$$v(\mathcal{P}(^k X, Y), Q, P) := \sup \{|\lambda| : \lambda \in V(\mathcal{P}(^k X, Y), Q, P)\}$$
and
$$v_Q(P) := \sup \{|\lambda| : \lambda \in V_Q(P)\}.$$ 
By Proposition \ref{Prop1}, these two numbers coincide. That is, the following corollary holds.

\begin{corollary}
    Let $X, Y$ be Banach spaces, and let $Q \in \mathcal{P}(^k X, Y)$ with $\|Q\|=1$.
    Then
    $$v(\mathcal{P}(^k X, Y), Q, P) = v_Q(P)$$
    for every $P\in \mathcal{P}(^k X, Y)$.
    \end{corollary}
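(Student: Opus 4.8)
The plan is to reduce everything to Proposition \ref{Prop1} and an elementary fact about convex hulls in the scalar field. By definition,
$$
v(\mathcal{P}(^k X, Y), Q, P) = \sup\{|\lambda| : \lambda \in V(\mathcal{P}(^k X, Y), Q, P)\}
\quad\text{and}\quad
v_Q(P) = \sup\{|\lambda| : \lambda \in V_Q(P)\},
$$
and both ranges are subsets of the scalar field $\conjK$. Proposition \ref{Prop1} identifies the intrinsic range with $\conv V_Q(P)$, so the corollary is equivalent to the assertion that passing to the convex hull does not change the supremum of the modulus, i.e. for $S := V_Q(P) \subseteq \conjK$ one has $\sup\{|\lambda| : \lambda \in \conv S\} = \sup\{|\lambda| : \lambda \in S\}$.

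Next I would prove the two inequalities. Since $S \subseteq \conv S$, the inequality $v_Q(P) \le v(\mathcal{P}(^k X, Y), Q, P)$ is immediate. For the reverse, I would use that the modulus $|\cdot|$ on $\conjK$ is subadditive and absolutely homogeneous, hence convex: any $\lambda \in \conv S$ is a finite convex combination $\lambda = \sum_i t_i \lambda_i$ with $\lambda_i \in S$, $t_i \ge 0$, and $\sum_i t_i = 1$, so that
$$
|\lambda| = \Bigl| \sum_i t_i \lambda_i \Bigr| \le \sum_i t_i |\lambda_i| \le \sup_{\mu \in S} |\mu| = v_Q(P).
$$
Taking the supremum over $\lambda \in \conv S = V(\mathcal{P}(^k X, Y), Q, P)$ yields $v(\mathcal{P}(^k X, Y), Q, P) \le v_Q(P)$, and combining the two inequalities gives the claimed equality.

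There is essentially no genuine obstacle here: all the substantive work sits in Proposition \ref{Prop1}, and the remaining step is the standard observation that the supremum of a convex function over a set agrees with its supremum over the convex hull. The only point to verify with a little care is that the convexity estimate for $|\cdot|$ holds uniformly in both the real and complex cases, which it does since the argument uses only the triangle inequality and homogeneity of the modulus.
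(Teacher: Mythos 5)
Your proof is correct and follows the same route as the paper, which simply invokes Proposition \ref{Prop1} and notes that the two numbers coincide. You have merely spelled out the elementary step the paper leaves implicit, namely that taking the convex hull of a subset of $\conjK$ does not change the supremum of the modulus.
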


The number $v_Q(P)$ is called the \textit{numerical radius of $P$ with respect to $Q$}.

Next, given a norm-one polynomial $Q \in \mathcal{P}(^k X, Y)$ and $\delta > 0$, we define
$$v_{Q,\delta}(P) := \sup \{|y^*(P(x))| : y^* \in S_{Y^*}, \ x \in S_X, \ \re y^*(Q(x))>1-\delta\}$$
for every $P \in \mathcal{P}(^k X, Y)$.

\begin{proposition}\label{p.20}
    Let $X, Y$ be Banach spaces, and let $Q \in \mathcal{P}(^k X, Y)$ with $\|Q\|=1$.
    Then
    $$v_Q(P) = \inf_{\delta > 0} v_{Q,\delta}(P)$$
    for every $P\in \mathcal{P}(^k X, Y)$.
\end{proposition}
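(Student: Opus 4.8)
The plan is to establish the two inequalities $v_Q(P) \le \inf_{\delta>0} v_{Q,\delta}(P)$ and $v_Q(P) \ge \inf_{\delta>0} v_{Q,\delta}(P)$ separately. Throughout I would abbreviate
$$
S_\delta = \{ y^*(P(x)) : y^* \in S_{Y^*},\ x \in S_X,\ \re y^*(Q(x)) > 1 - \delta \},
$$
so that $V_Q(P) = \bigcap_{\delta>0} \overline{S_\delta}$ and $v_{Q,\delta}(P) = \sup\{|\lambda| : \lambda \in S_\delta\}$. Three preliminary observations drive everything: each $S_\delta$ is nonempty (since $\|Q\|=1$ lets one pick $x \in S_X$ with $\|Q(x)\|$ arbitrarily close to $1$, and then, by Hahn--Banach, $y^* \in S_{Y^*}$ with $y^*(Q(x)) = \|Q(x)\|$); each $S_\delta$ is bounded in $\conjK$ by $\|P\|$, because $|y^*(P(x))| \le \|P\|$; and the family $\{\overline{S_\delta}\}_{\delta>0}$ is nested, shrinking as $\delta \downarrow 0$, since the constraint $\re y^*(Q(x)) > 1-\delta$ tightens as $\delta$ decreases.

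For the first inequality I would use that $V_Q(P) \subseteq \overline{S_\delta}$ for every $\delta$, together with the fact that the supremum of the continuous function $\lambda \mapsto |\lambda|$ over a set equals its supremum over the closure. This gives $v_Q(P) \le \sup\{|\lambda| : \lambda \in \overline{S_\delta}\} = v_{Q,\delta}(P)$ for each $\delta$, and taking the infimum over $\delta$ yields $v_Q(P) \le \inf_{\delta>0} v_{Q,\delta}(P)$.

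The substance lies in the reverse inequality, and the main obstacle is that the modulus-maximizers of the different sets $\overline{S_\delta}$ need not coincide: knowing that each $\overline{S_\delta}$ contains a point of large modulus does not by itself produce a single such point lying in the whole intersection. I would overcome this by a nested-compactness argument. Setting $r = \inf_{\delta>0} v_{Q,\delta}(P)$, I define
$$
C_\delta = \{ \lambda \in \overline{S_\delta} : |\lambda| \ge r \}.
$$
Since $\overline{S_\delta}$ is a closed bounded subset of $\conjK$, it is compact, so each $C_\delta$ is compact; and $C_\delta$ is nonempty because $v_{Q,\delta}(P) \ge r$ forces $\overline{S_\delta}$ to contain a point of modulus $v_{Q,\delta}(P) \ge r$. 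The family $\{C_\delta\}$ is nested, so along $\delta_n = 1/n$ it becomes a decreasing sequence of nonempty compact sets; by Cantor's intersection theorem its intersection is nonempty, and it equals $\bigcap_{\delta>0} C_\delta$ (for any $\delta$, choosing $n$ with $1/n < \delta$ gives $C_{1/n} \subseteq C_\delta$). Any $\lambda_0$ in this intersection satisfies $\lambda_0 \in \bigcap_{\delta>0}\overline{S_\delta} = V_Q(P)$ and $|\lambda_0| \ge r$, whence $v_Q(P) \ge |\lambda_0| \ge r$. Combining the two inequalities gives $v_Q(P) = \inf_{\delta>0} v_{Q,\delta}(P)$.
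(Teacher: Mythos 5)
Your proof is correct and takes essentially the same route as the paper: the easy inequality is obtained identically, and the reverse inequality is in both cases a compactness argument in $\mathbb{K}$ exploiting boundedness by $\|P\|$ and the nestedness of the sets $\overline{S_\delta}$ along the cofinal sequence $\delta_n = 1/n$. The only difference is packaging: the paper applies Bolzano--Weierstrass to approximate maximizers $\lambda_n$ with $|\lambda_n| > I - \alpha$ and removes the slack $\alpha$ at the end, whereas you use exact maximizers (the supremum of $|\cdot|$ is attained on each compact $\overline{S_\delta}$) together with Cantor's nested-intersection theorem, which avoids the slack altogether.
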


\begin{proof}
    Note that
    \begin{align*}
        V_Q(P) & = \bigcap_{\varepsilon > 0} \overline{\{y^*(P(x)) : y^* \in S_{Y^*}, \ x \in S_X, \ \re y^*(Q(x))>1-\varepsilon\}}\\
        & \subseteq  \overline{\{y^*(P(x)) : y^* \in S_{Y^*}, \ x \in S_X, \ \re y^*(Q(x))>1-\delta\}}
    \end{align*}
    for every $\delta > 0$. Thus, we have 
    \begin{align*}
        v_Q & (P) = \sup \{|\lambda| : \lambda \in V_Q(P)\}\\
        & \leq \sup \{|\lambda| : \lambda \in \overline{\{y^*(P(x)) : y^* \in S_{Y^*}, \ x \in S_X, \ \re y^*(Q(x))>1-\delta\}}\}\\
        & = \sup \{|\lambda| : \lambda \in \{y^*(P(x)) : y^* \in S_{Y^*}, \ x \in S_X, \ \re y^*(Q(x))>1-\delta\}\},
    \end{align*}
    for all $\delta > 0$.
    Hence,
    $$v_Q(P) \leq \inf_{\delta > 0} v_{Q,\delta}(P).$$
    Let $I = \inf_{\delta > 0} v_{Q,\delta}(P)$. We shall prove that $v_Q(P) = I$. Since $I$ is an upper bound of $\{|\lambda| : \lambda \in V_Q(P)\}$, it is suffices to show that, for every $\alpha > 0$, there exists $\lambda \in V_Q(P)$ such that $I - \alpha \leq |\lambda|$. Fix $\alpha>0$. Observe that $I - \alpha < v_{Q,\delta}(P)$ for every $\delta > 0$. In particular, for $\delta_n = \frac{1}{n}$, we obtain $I - \alpha < v_{Q,\delta_n}(P)$. So, there is $\lambda_n \in \{y^*(P(x)) : y^* \in S_{Y^*}, \ x \in S_X, \ \re y^*(Q(x))>1-\delta_n\}$ such that $I - \alpha < |\lambda_n|$. Now, note that $|\lambda_n| \leq \|P\|$, because
    $$|y^*(P(x))| \leq \|y^*\|\|P\|\|x\|^k \leq \|P\|$$
    for every $y^* \in S_{Y^*}$ and $x \in S_X$. Hence, $(\lambda_n)$ is a bounded sequence, and admits a convergent subsequence $(\lambda_{n_j} ) \to \lambda \in \conjK$. Let us verify that $\lambda \in V_Q(P)$. Fix $\varepsilon > 0$. Take $n_0 \in \conjN$ such that $\frac{1}{n_0} < \varepsilon$. Thus, if $n \geq n_0$, then
    \begin{align*}
        \lambda_n & \in \{y^*(P(x)) : y^* \in S_{Y^*}, \ x \in S_X, \ \re y^*(Q(x))>1-\delta_n\}\\
        & \subseteq \{y^*(P(x)) : y^* \in S_{Y^*}, \ x \in S_X, \ \re y^*(Q(x))>1-\delta_{n_0}\}\\
        & \subseteq \{y^*(P(x)) : y^* \in S_{Y^*}, \ x \in S_X, \ \re y^*(Q(x))>1-\varepsilon\}.
    \end{align*}
    Therefore, $\lambda \in \overline{\{y^*(P(x)) : y^* \in S_{Y^*}, \ x \in S_X, \ \re y^*(Q(x))>1-\varepsilon\}}$. Since $\varepsilon$ is arbitrary, $\lambda \in V_Q(P)$. Being $I - \alpha < |\lambda_n|$ for every $n$, we obtain $I - \alpha \leq |\lambda|$. This completes the proof.
\end{proof}

We now formally define the polynomial numerical index of a pair of Banach spaces with respect to a norm-one homogeneous polynomial.

\begin{definition}
    Let $X, Y$ be Banach spaces, and let $Q \in \mathcal{P}(^k X, Y)$ with $\|Q\|=1$. The \textit{polynomial numerical index} of $(X, Y)$ with respect to $Q$ is the number
    $$n_Q^{(k)}(X,Y)\coloneqq \inf\{v_Q(P):P\in \mathcal{P}(^k X,Y), \|P\|=1\}.$$
\end{definition}

When \( k = 1 \), this definition recovers the concept of the numerical index of \( (X, Y) \) with respect to a norm-one operator, as introduced in \cite{A14}.

We conclude this section by showing that $n_Q^{(k)}(\conjK, \conjK) = 1$ for every positive integer $k$ and every $Q \in S_{\mathcal{P}(^k \conjK)}$.

\begin{proposition}\label{PropK}
   For every positive integer $k$, we have $n_Q^{(k)}(\conjK, \conjK) = 1$.
\end{proposition}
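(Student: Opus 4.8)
The plan is to exploit that the domain $\conjK$ is one-dimensional, so that Proposition \ref{Prop3.4} applies and reduces the computation of $V_Q(P)$ to an explicit finite description. First I would identify $\mathcal{P}(^k\conjK)$ with $\conjK$: every $P \in \mathcal{P}(^k\conjK)$ has the form $P(x)=ax^k$ for a unique $a\in\conjK$, with $\|P\|=\sup_{|x|\le 1}|ax^k|=|a|$. Thus $Q \in S_{\mathcal{P}(^k\conjK)}$ means $Q(x)=bx^k$ with $|b|=1$, and a norm-one competitor $P$ means $P(x)=ax^k$ with $|a|=1$. I would also record that $\conjK^*$ is identified with $\conjK$ acting by multiplication, so that $y^*\in S_{\conjK^*}$ corresponds to a scalar $\lambda$ with $|\lambda|=1$ and $y^*(w)=\lambda w$.

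Next, since $\conjK$ is finite-dimensional and $\|Q\|=1$, Proposition \ref{Prop3.4} gives
$$V_Q(P)=\{\lambda\,P(x) : |x|=1,\ |\lambda|=1,\ \lambda\,Q(x)=1\}.$$
Here the defining constraint reads $\lambda b x^k=1$, which forces $\lambda x^k = 1/b$; since $|b|=1$ this value has modulus one and is therefore attained by an admissible pair $(x,\lambda)$ (for any $x\in S_{\conjK}$ one simply sets $\lambda=1/(bx^k)$, which is unimodular). For such a pair, $\lambda\,P(x)=\lambda a x^k = a\,(\lambda x^k)=a/b$, which does not depend on the choice of $x$. Hence $V_Q(P)=\{a/b\}$ and $v_Q(P)=|a/b|=|a|/|b|=1$.

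Since this holds for every norm-one $P$, the infimum defining the index is an infimum of the constant value $1$, and therefore $n_Q^{(k)}(\conjK,\conjK)=1$. I do not expect a genuine obstacle here: the only step requiring attention is the correct description of the duality pairing on $\conjK$ and the verification that the constraint set is non-empty with an admissible unimodular $\lambda$. Once the pairing is fixed, the cancellation $\lambda x^k=1/b$ makes $V_Q(P)$ collapse to the single point $a/b$, regardless of whether $\conjK=\conjR$ or $\conjK=\conjC$ (in the real case $1/b=b$ and in the complex case $1/b=\overline{b}$, but in both $|a/b|=1$).
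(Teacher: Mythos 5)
Your proposal is correct and follows essentially the same route as the paper: both invoke Proposition \ref{Prop3.4} to reduce $v_Q(P)$ to the constraint set $y^*(Q(x))=1$, parameterize $P$, $Q$, and $y^*$ by unimodular scalars, and use the cancellation $\lambda x^k = 1/b$ to conclude every admissible value has modulus $1$. Your explicit check that the constraint set is non-empty (choosing $\lambda = 1/(bx^k)$) is a small detail the paper leaves implicit, but it does not change the argument.
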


\begin{proof}
Fix $Q \in S_{\mathcal{P}(^k \conjK)}$. By definition,
$$
n^{(k)}_Q(\conjK, \conjK) = \inf \{ v_Q(P) : P \in S_{\mathcal{P}(^k\conjK)} \}.
$$
According to Proposition \ref{Prop3.4}, for every $P \in \mathcal{P}(^k \conjK)$,
$$
v_Q(P) = \sup \left\{ |y^*(P(x))| : x \in S_{\conjK}, y^* \in S_{\conjK^*}, y^*(Q(x)) = 1 \right\}.
$$ 

Consider $P \in S_{\mathcal{P}(^k \conjK)}$ and \( y^* \in S_{\conjK^*} \). Thus, we can write $P(x) = a x^k$, $Q(x)=bx^k$ and \( y^*(x) = c x \), where $|a|=|b|=|c| = 1$. If $y^*(Q(x)) = 1$, then $cbx^k = 1$, that is, $x^k = \frac{1}{cb}$. In this case,
$$
|y^*(P(x))| = |c\cdot a x^k| = \left|\frac{ca}{cb} \right| = 1.
$$

Hence, \(v_Q(P) = 1 \) for all $P \in S_{ \mathcal{P}(^k\conjK)} $.
Therefore, \( n^{(k)}_Q(\conjK, \conjK) = 1 \).
\end{proof}

\section{Properties of $n_Q^{(k)}(X,Y)$}

In this section, we present several properties of the polynomial numerical index of a pair of Banach spaces with respect to a norm-one homogeneous polynomial. All results are inspired by \cite[Section 3]{KMMPQ20}. Here, $\conjT$ stands for the unit sphere of the base field $\conjK$.

The first result follows directly from \cite[Lemma 2.2]{KMMPQ20}.

\begin{proposition}\label{Lemma3.2}
Let $X, Y$ be Banach spaces, and let $Q \in \mathcal{P}(^k X, Y)$ with $\|Q\|=1$.
Then
$$v_Q(P)=\max_{\theta \in \mathbb{T}}\lim_{\alpha \rightarrow 0^+}\frac{\|Q+\alpha \theta P\|-1}{\alpha}=\lim_{\alpha \rightarrow 0^+}\max_{\theta \in \conjT}\frac{\|Q+\alpha \theta P\|-1}{\alpha}$$
for all $P\in \mathcal{P}(^k X,Y)$. 
\end{proposition}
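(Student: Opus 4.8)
The plan is to regard $\mathcal{P}(^k X, Y)$ as an abstract Banach space $Z$ in which $Q$ is a distinguished norm-one element and $P$ is arbitrary, and then to apply \cite[Lemma 2.2]{KMMPQ20} verbatim. The only preliminary work is to recognize $v_Q(P)$ as the numerical radius of $P$ attached to the intrinsic numerical range $V(\mathcal{P}(^k X, Y), Q, P)$, after which the abstract lemma applies directly.

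First I would record the identification. Put $Z = \mathcal{P}(^k X, Y)$ and regard $Q \in S_Z$ as the distinguished norm-one element. By the definition of the intrinsic numerical range, the quantity $v(\mathcal{P}(^k X, Y), Q, P) = \sup\{|\lambda| : \lambda \in V(\mathcal{P}(^k X, Y), Q, P)\}$ is exactly the numerical radius of the element $P$ of $Z$ with respect to the state $Q$, in the sense considered in \cite{KMMPQ20}. By the corollary establishing $v(\mathcal{P}(^k X, Y), Q, P) = v_Q(P)$, this number coincides with $v_Q(P)$. Thus the proposition reduces to the assertion that this numerical radius equals the two one-sided-derivative expressions on the right. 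Invoking \cite[Lemma 2.2]{KMMPQ20} with $Z = \mathcal{P}(^k X, Y)$, $u = Q$, and $a = P$, which states for any Banach space $Z$, any $u \in S_Z$, and any $a \in Z$ that
$$\sup\{|\lambda| : \lambda \in V(Z, u, a)\} = \max_{\theta \in \mathbb{T}}\lim_{\alpha \to 0^+}\frac{\|u + \alpha \theta a\| - 1}{\alpha} = \lim_{\alpha \to 0^+}\max_{\theta \in \mathbb{T}}\frac{\|u + \alpha \theta a\| - 1}{\alpha},$$
then yields the claimed chain of equalities for every $P \in \mathcal{P}(^k X, Y)$.

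Although the excerpt permits citing this lemma directly, it is worth indicating where its content lies, since that is where the only genuine difficulty would arise in a self-contained argument. The inner limit is the classical Bonsall--Duncan formula $\max \re V(Z, u, a) = \inf_{\alpha > 0}\alpha^{-1}(\|u + \alpha a\| - 1) = \lim_{\alpha \to 0^+}\alpha^{-1}(\|u + \alpha a\| - 1)$; existence of the limit follows from the convexity of $\alpha \mapsto \|u + \alpha a\|$, which makes the difference quotient nondecreasing in $\alpha$, and replacing $a$ by $\theta a$ and maximizing over $\theta \in \mathbb{T}$ turns $\max \re$ into $\max|\cdot|$, giving the first equality. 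The main obstacle is the second equality, namely the interchange of $\max_{\theta \in \mathbb{T}}$ with $\lim_{\alpha \to 0^+}$. I would settle it by combining the monotonicity of each difference quotient in $\alpha$ with the compactness of $\mathbb{T}$ and the continuity of $\theta \mapsto \|u + \alpha \theta a\|$: along any sequence $\alpha_n \to 0^+$ one extracts convergent maximizers $\theta_{\alpha_n} \to \theta_0$, bounds the difference quotient at $\alpha_n$ (with $\theta = \theta_{\alpha_n}$) above by the one at a fixed $\beta > \alpha_n$ using monotonicity, passes to the limit in $n$ by continuity, and finally lets $\beta \to 0^+$ to recover $\max_{\theta}\lim_{\alpha\to 0^+}$.
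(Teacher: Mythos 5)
Your proposal is correct and takes essentially the same route as the paper: the paper's entire proof is the one-line observation that the statement follows directly from \cite[Lemma 2.2]{KMMPQ20}, applied to the Banach space $\mathcal{P}(^k X, Y)$ with distinguished norm-one element $Q$, which is exactly your reduction. Your extra details---the identification $v_Q(P) = v(\mathcal{P}(^k X, Y), Q, P)$ via the corollary following Proposition \ref{Prop1}, and the sketch of the convexity/compactness arguments inside the cited lemma---are accurate and merely make explicit what the paper leaves implicit.
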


The next proposition provides a characterization of the polynomial numerical index with respect to a norm-one homogeneous polynomial in terms of the norm of the space $\mathcal{P}(^k X,Y)$. This result is a particular case of \cite[Proposition 2.5]{KMMPQ20}.

\begin{proposition}\label{Prop3.3}
Let $X, Y$ be Banach spaces, let $Q \in \mathcal{P}(^k X, Y)$ with $\|Q\|=1$, and let $0<\lambda \leq 1$. Then the following statements are equivalent:
\begin{enumerate}
    \item[i)] $n_Q^{(k)}(X,Y)\geq \lambda$.
    \item[ii)] $\max_{\theta\in\mathbb{T}}\|Q+\theta P\|\geq 1+\lambda\|P\|$ for every $P\in \mathcal{P}(^k X,Y)$.
\end{enumerate}
\end{proposition}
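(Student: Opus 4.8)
The plan is to prove the equivalence by connecting both statements through the formula for $v_Q(P)$ established in Proposition~\ref{Lemma3.2}, namely
$$v_Q(P)=\max_{\theta \in \mathbb{T}}\lim_{\alpha \rightarrow 0^+}\frac{\|Q+\alpha \theta P\|-1}{\alpha}.$$
The key observation driving the argument is that the function $\alpha \mapsto \frac{\|Q+\alpha\theta P\|-1}{\alpha}$, or more precisely $\alpha \mapsto \max_{\theta\in\mathbb{T}}(\|Q+\alpha\theta P\|-1)$, should be monotone (nondecreasing) in $\alpha>0$, because $\|Q+\alpha\theta P\|$ is a convex function of $\alpha$ with value $1$ at $\alpha=0$. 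Convexity of the norm is what makes the difference quotient of a convex function nondecreasing, and this is the engine that lets me pass between the limiting derivative at $0$ and the value at $\alpha=1$.

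First I would establish the direction (i)$\Rightarrow$(ii). Assuming $n_Q^{(k)}(X,Y)\geq\lambda$, I have $v_Q(P)\geq\lambda$ for every norm-one $P$, and by homogeneity $v_Q(P)\geq\lambda\|P\|$ for all $P$ (noting $v_Q$ is homogeneous of degree one, which follows from its definition). Using Proposition~\ref{Lemma3.2}, this reads
$$\max_{\theta\in\mathbb{T}}\lim_{\alpha\to 0^+}\frac{\|Q+\alpha\theta P\|-1}{\alpha}\geq\lambda\|P\|.$$
Then I would invoke the monotonicity of the difference quotient: since $g(\alpha):=\max_{\theta\in\mathbb{T}}\|Q+\alpha\theta P\|$ is convex in $\alpha$ with $g(0)=1$, the quantity $\frac{g(\alpha)-1}{\alpha}$ is nondecreasing, so its limit as $\alpha\to 0^+$ is a lower bound for its value at $\alpha=1$. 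Evaluating at $\alpha=1$ gives $\max_{\theta\in\mathbb{T}}\|Q+\theta P\|-1\geq\lambda\|P\|$, which rearranges to (ii). I should be careful that the $\max$ over $\theta\in\mathbb{T}$ commutes appropriately with the limit, but the second equality in Proposition~\ref{Lemma3.2} already supplies exactly this interchange, so I can work with the cleaner expression $\lim_{\alpha\to 0^+}\max_{\theta}\frac{\|Q+\alpha\theta P\|-1}{\alpha}$ throughout.

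For (ii)$\Rightarrow$(i), I would argue by scaling. Fix a norm-one $P$ and apply the hypothesis (ii) to the rescaled polynomial $\alpha P$ for small $\alpha>0$: since (ii) holds for \emph{every} polynomial in $\mathcal{P}(^kX,Y)$, I get $\max_{\theta\in\mathbb{T}}\|Q+\alpha\theta P\|\geq 1+\lambda\alpha\|P\|=1+\lambda\alpha$. Rearranging yields $\max_{\theta}\frac{\|Q+\alpha\theta P\|-1}{\alpha}\geq\lambda$ for every $\alpha>0$, and taking the limit as $\alpha\to 0^+$ together with Proposition~\ref{Lemma3.2} gives $v_Q(P)\geq\lambda$. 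Since $P$ was an arbitrary norm-one polynomial, the infimum $n_Q^{(k)}(X,Y)\geq\lambda$, which is (i).

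The main obstacle I anticipate is \textbf{justifying the convexity/monotonicity step cleanly}, i.e.\ verifying that passing from the derivative at $0$ to the value at $\alpha=1$ is legitimate. The cleanest route is to observe that (ii)$\Rightarrow$(i) only needs one inequality direction and is handled purely by the scaling trick above without any convexity, while (i)$\Rightarrow$(ii) is where monotonicity is genuinely required. Since this proposition is stated to be a particular case of \cite[Proposition 2.5]{KMMPQ20}, the rigorous verification of the convexity argument can be deferred to that reference, and the proof reduces to checking that the present setting---with $\mathcal{P}(^kX,Y)$ as the ambient normed space, $Q$ as the norm-one distinguished element, and $v_Q$ in the role of the abstract numerical radius---satisfies the hypotheses of that general proposition. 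Thus the honest work is bookkeeping: matching notation and confirming that $v_Q(P)=\max_{\theta\in\mathbb{T}}\lim_{\alpha\to 0^+}\frac{\|Q+\alpha\theta P\|-1}{\alpha}$ (Proposition~\ref{Lemma3.2}) is precisely the numerical-radius quantity to which the cited result applies.
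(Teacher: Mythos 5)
Your argument is correct, but it takes a genuinely different route from the paper, which offers no proof at all: the paper simply states that Proposition~\ref{Prop3.3} is a particular case of \cite[Proposition 2.5]{KMMPQ20}, applied in the Banach space $\mathcal{P}(^k X,Y)$ with $Q$ as the distinguished norm-one vector, the hypothesis-matching being exactly the identification $v_Q(P)=v(\mathcal{P}(^k X, Y), Q, P)$ supplied by the corollary following Proposition~\ref{Prop1}. What you do instead is reconstruct the underlying argument from Proposition~\ref{Lemma3.2}. For i) $\Rightarrow$ ii), the function $g(\alpha)=\max_{\theta\in\mathbb{T}}\|Q+\alpha\theta P\|$ is convex on $[0,\infty)$ (a maximum of convex functions of $\alpha$) with $g(0)=\|Q\|=1$, so its difference quotients are nondecreasing and
$$v_Q(P)=\lim_{\alpha\to 0^+}\frac{g(\alpha)-1}{\alpha}=\inf_{\alpha>0}\frac{g(\alpha)-1}{\alpha}\leq g(1)-1,$$
which combined with $v_Q(P)\geq\lambda\|P\|$ (homogeneity of $v_Q$ plus i)) yields ii); for ii) $\Rightarrow$ i), your scaling trick (apply ii) to $\alpha P$, divide by $\alpha$, let $\alpha\to 0^+$) needs no convexity at all. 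Both directions are sound, and the interchange of maximum and limit that you worry about is precisely the second equality in Proposition~\ref{Lemma3.2}, as you note. Your closing hedge---deferring the convexity step to the reference---is unnecessary: monotonicity of difference quotients of a convex function is standard, so your proof is complete as written. The trade-off between the two approaches: the paper's citation is shorter and matches its stated intent of presenting the result as a special case of the abstract theory, while your proof makes the proposition self-contained modulo Proposition~\ref{Lemma3.2} and, in effect, reproduces the proof of \cite[Proposition 2.5]{KMMPQ20} in this concrete polynomial setting.
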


When $\lambda = 1$ in the above proposition, we recover the notion of spear vector in the space $\mathcal{P}(^k X,Y)$. A norm-one polynomial $Q \in \mathcal{P}(^k X,Y)$ is called a \textit{spear vector} of $\mathcal{P}(^k X,Y)$ if
$$\max_{\theta\in\mathbb{T}}\|Q+\theta P\| = 1 + \|P\|$$ 
for every $P\in \mathcal{P}(^k X,Y)$. The concept of spear vector of a Banach space was introduced in \cite{A14} and studied extensively in \cite{KMMP18}. In this context, Proposition \ref{Prop3.3} shows that $Q$ is a spear vector of $\mathcal{P}(^k X,Y)$ if and only if $n_Q^{(k)}(X,Y) = 1$.

To prove the following result, we require some properties of the adjoint of a polynomial. Given $P \in \mathcal{P}(^k X,Y)$, the \textit{adjoint of} $P$, denoted by $P^*$, is the bounded linear operator $P^*: Y^* \rightarrow \mathcal{P}(^k X)$ defined by $P^*(y^*) = y^* \circ P$. It is straightforward to verify that:
\begin{enumerate}
\item[i)] $\|P^*\| = \|P\|$ for all $P \in \mathcal{P}(^k X,Y)$;
\item[ii)] if $P, Q \in \mathcal{P}(^k X,Y)$ and $\alpha \in \conjK$, then $(Q+\alpha P)^* = Q^*+\alpha P^*$.
\end{enumerate}

\begin{proposition}
Let $X, Y$ be Banach spaces, and let $Q \in \mathcal{P}(^k X, Y)$ with $\|Q\|=1$. Then
$$
n_{Q^*} \left( Y^*, \mathcal{P}(^k X) \right) \leq n_{Q}^{(k)}(X, Y).
$$
\end{proposition}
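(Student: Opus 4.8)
The plan is to reduce the claim to the norm characterization of Proposition \ref{Prop3.3} and then transport that characterization across the adjoint operation. Note first that $Q^* \colon Y^* \to \mathcal{P}(^k X)$ is a norm-one bounded linear operator, by the property $\|Q^*\| = \|Q\| = 1$, so $n_{Q^*}(Y^*, \mathcal{P}(^k X))$ is well defined. Since the numerical index with respect to an operator is exactly the case $k = 1$ of the index studied here, the equivalence of Proposition \ref{Prop3.3} --- which is a specialization of \cite[Proposition 2.5]{KMMPQ20} --- applies equally to the pair $(Y^*, \mathcal{P}(^k X))$ with respect to $Q^*$.

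If $n_{Q^*}(Y^*, \mathcal{P}(^k X)) = 0$ there is nothing to prove, since $n_Q^{(k)}(X,Y) \geq 0$. So I would set $\lambda_0 := n_{Q^*}(Y^*, \mathcal{P}(^k X))$ and assume $0 < \lambda_0 \leq 1$. Applying Proposition \ref{Prop3.3} (in the form valid for the linear index) with $\lambda = \lambda_0$, the lower bound $n_{Q^*}(Y^*, \mathcal{P}(^k X)) \geq \lambda_0$ yields
$$\max_{\theta \in \conjT} \|Q^* + \theta\, T\| \geq 1 + \lambda_0 \|T\| \qquad \text{for every } T \in \mathcal{L}(Y^*, \mathcal{P}(^k X)).$$

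The decisive step is to specialize this inequality, which holds for all operators $T$, to the adjoints $T = P^*$ with $P \in \mathcal{P}(^k X, Y)$ arbitrary. The two recorded properties of the adjoint give $Q^* + \theta P^* = (Q + \theta P)^*$ and hence $\|Q^* + \theta P^*\| = \|Q + \theta P\|$, while $\|P^*\| = \|P\|$. Substituting these identities turns the displayed inequality into
$$\max_{\theta \in \conjT} \|Q + \theta P\| \geq 1 + \lambda_0 \|P\| \qquad \text{for every } P \in \mathcal{P}(^k X, Y).$$
Reading this through the converse implication of Proposition \ref{Prop3.3} gives $n_Q^{(k)}(X,Y) \geq \lambda_0 = n_{Q^*}(Y^*, \mathcal{P}(^k X))$, which is the assertion.

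The only delicate point --- and the main, if mild, obstacle --- is to confirm that the characterization is genuinely available for the linear index $n_{Q^*}(Y^*, \mathcal{P}(^k X))$; this is why one should invoke the general result \cite[Proposition 2.5]{KMMPQ20} directly, rather than Proposition \ref{Prop3.3}, which is phrased for polynomials. Beyond that, the argument is just the bookkeeping with the two adjoint identities together with the trivial zero-index case.
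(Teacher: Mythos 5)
Your proof is correct, but it runs through a different key result than the paper's. The paper proves the stronger pointwise identity $v_Q(P) = v_{Q^*}(P^*)$ for every $P \in \mathcal{P}(^k X, Y)$: it expresses the numerical radius by the limit formula of Proposition \ref{Lemma3.2}, namely $v_Q(P) = \lim_{\alpha \to 0^+} \max_{\theta \in \conjT} \frac{\|Q+\alpha\theta P\|-1}{\alpha}$, transports the norms through the two adjoint identities, and recognizes the resulting limit as $v_{Q^*}(P^*)$ via the operator analogue \cite[Lemma 3.2]{KMMPQ20}; the index inequality then follows by comparing infima, the essential point being that the adjoints $P^*$ form only a subset of $S_{\mathcal{L}(Y^*, \mathcal{P}(^k X))}$, so the infimum defining $n_{Q^*}$ runs over a larger set. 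You instead work entirely at the level of the indices, using both directions of the norm characterization (Proposition \ref{Prop3.3}, i.e.\ \cite[Proposition 2.5]{KMMPQ20}) and the same two adjoint identities to transfer the inequality $\max_{\theta \in \conjT}\|Q^*+\theta T\| \geq 1+\lambda_0\|T\|$, valid for all $T \in \mathcal{L}(Y^*,\mathcal{P}(^k X))$, down to the inequality $\max_{\theta \in \conjT}\|Q+\theta P\| \geq 1+\lambda_0\|P\|$ for all $P \in \mathcal{P}(^k X,Y)$. Both arguments rest on exactly the same structural fact --- $P \mapsto P^*$ is a linear isometry carrying $Q$ to $Q^*$ --- but the paper's version yields the extra information that the numerical radii themselves coincide, $v_Q(P)=v_{Q^*}(P^*)$, not merely that the indices compare; yours is softer, needing no limit formula, at the modest bookkeeping cost of the separate case $\lambda_0 = 0$ and of the (easy, but worth stating explicitly) observation that $\lambda_0 \leq 1$, which is needed for Proposition \ref{Prop3.3} to apply and follows from $v_{Q^*}(T) \leq \|T\|$.
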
 

\begin{proof}
Let \(P \in \mathcal{P}(^k X, Y)\) with \(\|P\| = 1\). Hence, \(\|Q^*\| = \|P^*\| = 1\). By Proposition \ref{Lemma3.2},
\begin{align*}
    v_{Q}(P) & = \lim_{\alpha \to 0^+} \max_{\theta \in \conjT }\frac{\|Q + \alpha \theta P\|-1}{\alpha}\\ 
    & = \lim_{\alpha \to 0^+} \max_{\theta \in\conjT }\frac{\|Q^* + \alpha \theta P^*\|-1}{\alpha}\\ 
    & = v_{Q^*}(P^*),
\end{align*}
where the last equality follows from \cite[Lemma 3.2]{KMMPQ20}. Thus,
\begin{align*}
    n_{Q^*} (Y^*, \mathcal{P}(^k X)) &= \inf \{ v_{Q^*}(T) : T \in S_{\mathcal{L}(Y^*, \mathcal{P}(^k X))} \}\\ & \leq \inf \{ v_{Q^*}(P^*) : P \in S_{\mathcal{P}(^k X, Y)} \}
    \\ & = \inf \{ v_{Q}(P) : P \in S_{\mathcal{P}(^k X, Y)} \}
    \\ & = n_{Q}^{(k)}(X, Y).
\end{align*}
\end{proof} 

Next, we show that the polynomial numerical index with respect to a norm-one homogeneous polynomial can be controlled by the numerical radius of the operators on the range space. For this, we require the following lemma.

\begin{lemma}\label{Lemma3.7}
    Let $X, Y$ be Banach spaces, and let $Q \in \mathcal{P}(^k X, Y)$ with $\|Q\|=1$. Then
    $$v_{Q}(T \circ Q) \leq v(T)$$
    for every $T \in \mathcal{L}(Y)$.
\end{lemma}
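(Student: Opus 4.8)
The plan is to reduce the statement to an estimate between approximate numerical radii at a common level $\delta$ and then invoke Proposition \ref{p.20}. First note that, since $T$ is linear and $Q$ is $k$-homogeneous, $T\circ Q$ again belongs to $\mathcal{P}(^kX,Y)$, so $v_Q(T\circ Q)$ is defined. By Proposition \ref{p.20}, $v_Q(T\circ Q)=\inf_{\delta>0}v_{Q,\delta}(T\circ Q)$, and, applying the same proposition to the norm-one polynomial $\id_Y\in\mathcal{P}(^1Y,Y)$, we get $\inf_{\delta>0}v_{\id_Y,\delta}(T)=v_{\id_Y}(T)$. Thus it suffices to prove the pointwise comparison $v_{Q,\delta}(T\circ Q)\le v_{\id_Y,\delta}(T)$ for every $\delta\in(0,1)$, together with the identification $v_{\id_Y}(T)=v(T)$.

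The comparison is obtained by a normalization trick. Fix $\delta\in(0,1)$ and take $x\in S_X$ and $y^*\in S_{Y^*}$ with $\re y^*(Q(x))>1-\delta$. Writing $y=Q(x)$, we have $\|y\|\le\|Q\|\,\|x\|^k=1$ and $0<1-\delta<\re y^*(y)\le\|y\|\le 1$, so $y\ne 0$ and $u:=y/\|y\|\in S_Y$. Since $\|y\|\le 1$, dividing by $\|y\|$ only increases the real part, giving $\re y^*(u)=\re y^*(y)/\|y\|\ge\re y^*(y)>1-\delta$; moreover $|y^*(T(Q(x)))|=\|y\|\,|y^*(Tu)|\le|y^*(Tu)|$. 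Hence every value appearing in the supremum defining $v_{Q,\delta}(T\circ Q)$ is dominated by $|y^*(Tu)|$ for an admissible pair $(u,y^*)$ of the level-$\delta$ approximate spatial numerical radius of $T$ with respect to $\id_Y$, and taking suprema yields $v_{Q,\delta}(T\circ Q)\le v_{\id_Y,\delta}(T)$. Taking the infimum over $\delta$ then gives $v_Q(T\circ Q)\le v_{\id_Y}(T)$.

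The main obstacle is the final identification $v_{\id_Y}(T)=v(T)$: the estimates above produce only the relaxed constraint $\re y^*(u)>1-\delta$, whereas the classical numerical radius $v(T)$ is defined with the exact condition $y^*(u)=1$, and replacing an approximate support functional by an exact one need not leave $y^*(Tu)$ unchanged. This gap is precisely the content of the $k=1$, $G=\id$ case, in which the approximated spatial numerical range reduces to the classical numerical range of $T\in\mathcal{L}(Y)$; the equality $v_{\id_Y}(T)=v(T)$ therefore follows from the standard theory of numerical ranges in Banach spaces (Bonsall--Duncan \cite{BD71,BD73}, and Ardalani \cite{A14} for the operator formulation). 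Alternatively, one may bypass the $\delta$-analysis altogether: the map $R\colon\mathcal{L}(Y)\to\mathcal{P}(^kX,Y)$, $R(S)=S\circ Q$, is linear with $\|R\|\le 1$ and $R(\id_Y)=Q$, so its adjoint sends each $\phi\in\mathcal{P}(^kX,Y)^*$ with $\|\phi\|=\phi(Q)=1$ to a functional $\psi=R^*\phi\in\mathcal{L}(Y)^*$ with $\|\psi\|=\psi(\id_Y)=1$ and $\psi(T)=\phi(T\circ Q)$; this yields the inclusion $V(\mathcal{P}(^kX,Y),Q,T\circ Q)\subseteq V(\mathcal{L}(Y),\id_Y,T)$, and passing to numerical radii through the Corollary gives the same conclusion, again modulo the identification of the radius of $V(\mathcal{L}(Y),\id_Y,T)$ with $v(T)$.
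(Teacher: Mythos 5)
Your argument is correct, and your primary route is genuinely different from the paper's. The paper disposes of the lemma in two lines: it introduces the linear map $\Psi\colon\mathcal{L}(Y)\to\mathcal{P}(^k X,Y)$, $\Psi(T)=T\circ Q$, checks $\|\Psi\|=\|\Psi(\id)\|=1$, and invokes \cite[Lemma 2.4]{KMMPQ20}, an abstract lemma transferring numerical radii with respect to unit vectors along norm-one linear maps that send reference vector to reference vector. Your closing ``alternative'' --- pulling back norm-one functionals $\phi$ with $\phi(Q)=1$ through the adjoint of $S\mapsto S\circ Q$ to states at $\id_Y$, then passing to radii via the Corollary --- is exactly that argument with the cited lemma reproved by hand, so it reconstructs the paper's proof. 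Your main argument instead stays at the level of the definitions: the normalization $u=Q(x)/\|Q(x)\|$, the observation that dividing by $\|Q(x)\|\le 1$ preserves the constraint $\re y^*(\cdot)>1-\delta$ while only increasing $|y^*(T(\cdot))|$, and Proposition \ref{p.20} applied on both sides (your restriction to $\delta\in(0,1)$ is harmless since $\delta\mapsto v_{Q,\delta}$ is non-decreasing). This is elementary and self-contained within the paper's toolkit, whereas the paper's proof buys brevity at the price of an external lemma. Both routes meet at the same final step, which you correctly isolate as the only nontrivial external input: the identification $v_{\id_Y}(T)=v(T)$. This is indeed classical and your attribution is right; within the paper's framework it follows from Proposition \ref{Prop1} with $k=1$ and $Q=\id_Y$, which gives $\conv V_{\id_Y}(T)=V(\mathcal{L}(Y),\id_Y,T)$, combined with the Bonsall--Duncan theorem \cite{BD71} that $V(\mathcal{L}(Y),\id_Y,T)$ is the closed convex hull of the spatial numerical range of $T$, so the two radii coincide. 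Note that the paper's proof needs this identification just as much as yours --- it is what makes the right-hand side produced by \cite[Lemma 2.4]{KMMPQ20} equal to $v(T)$ rather than $v_{\id_Y}(T)$ --- but absorbs it silently into the citation; so there is no gap in your proposal, only a more explicit accounting of where the classical input enters.
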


\begin{proof}
    Consider the operator $\Psi\colon \mathcal{L}(Y) \rightarrow \mathcal{P}(^kX,Y)$ defined by $T \mapsto T \circ Q$.
   Since $\|\Psi\|=\|\Psi(\id)\|=\|\id\|=1$, the result follows from \cite[Lemma 2.4]{KMMPQ20}.
\end{proof}

As a consequence, we obtain the next result.

\begin{corollary}\label{p.23}
Let $X, Y$ be Banach spaces, and let $Q \in \mathcal{P}(^k X, Y)$ with $\|Q\|=1$. Then
\begin{align*}
       n_Q^{(k)}(X,Y) &\leq \inf \left\{ \frac{v(T)}{\|T \circ Q\|} : T \in \mathcal{L}(Y), T \circ Q \neq 0 \right\}\\
       & \leq \sup_{\varepsilon > 0} \inf \left\{ v(T) : T \in \mathcal{L}(Y), \|T \circ Q\| > 1 - \varepsilon \right\}.
   \end{align*}
\end{corollary}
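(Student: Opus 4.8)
The plan is to prove the two inequalities in the chain separately, using Lemma~\ref{Lemma3.7} as the only substantial input together with the fact that both $v_Q$ and the operator numerical radius $v$ are positively homogeneous. For the homogeneity of $v_Q$, note that replacing $P$ by $\lambda P$ multiplies every element of the defining sets $\{y^*(P(x))\}$ by $\lambda$ while leaving the constraint $\re y^*(Q(x)) > 1-\delta$ untouched; hence $V_Q(\lambda P) = \lambda V_Q(P)$ and so $v_Q(\lambda P) = |\lambda|\,v_Q(P)$.

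For the first inequality, I would fix $T \in \mathcal{L}(Y)$ with $T \circ Q \neq 0$ and test the index against the normalized polynomial $P := (T \circ Q)/\|T \circ Q\| \in S_{\mathcal{P}(^k X,Y)}$. Applying the definition of $n_Q^{(k)}(X,Y)$, then homogeneity, then Lemma~\ref{Lemma3.7}, gives
\[
n_Q^{(k)}(X,Y) \leq v_Q(P) = \frac{v_Q(T\circ Q)}{\|T\circ Q\|} \leq \frac{v(T)}{\|T\circ Q\|}.
\]
Taking the infimum over all $T$ with $T \circ Q \neq 0$ yields $n_Q^{(k)}(X,Y) \leq \inf\{v(T)/\|T\circ Q\| : T\circ Q \neq 0\}$.

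For the second inequality, let $\Lambda$ denote the middle quantity $\inf\{v(T)/\|T\circ Q\| : T\circ Q \neq 0\}$. By the definition of $\Lambda$ as an infimum, every $T$ with $T\circ Q \neq 0$ satisfies $v(T) \geq \|T\circ Q\|\,\Lambda$. Now fix $\varepsilon \in (0,1)$; any $T$ with $\|T\circ Q\| > 1-\varepsilon$ necessarily has $T\circ Q \neq 0$, so, using $\Lambda \geq 0$,
\[
v(T) \geq \|T\circ Q\|\,\Lambda \geq (1-\varepsilon)\,\Lambda .
\]
Taking the infimum over such $T$ gives $\inf\{v(T) : \|T\circ Q\| > 1-\varepsilon\} \geq (1-\varepsilon)\Lambda$, and letting $\varepsilon \to 0^+$ inside the supremum produces $\sup_{\varepsilon>0}\inf\{v(T) : \|T\circ Q\| > 1-\varepsilon\} \geq \Lambda$, which is exactly the claimed second inequality.

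I expect no genuine obstacle here, since all the analytic content is already packaged in Lemma~\ref{Lemma3.7}; the corollary is a matter of normalization and careful bookkeeping. The only points requiring care are keeping the direction of the estimates straight when passing through the infima and the supremum (the inner infimum is non-increasing in $\varepsilon$, so the supremum is realized in the limit $\varepsilon \to 0^+$), and checking that the set $\{T : \|T\circ Q\| > 1-\varepsilon\}$ is nonempty so that the inner infimum makes sense — which it is, since $\id \circ Q = Q$ has norm $1 > 1-\varepsilon$. In fact the same computation shows the reverse inequality $\sup_{\varepsilon>0}\inf\{v(T):\|T\circ Q\|>1-\varepsilon\} \leq \Lambda$ (the left-hand set of operators contains all $T$ with $\|T\circ Q\|=1$), so the last two quantities are actually equal; only the stated $\leq$ is needed, however.
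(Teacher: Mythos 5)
Your proposal is correct, and for the first inequality it coincides with the paper's argument: test $n_Q^{(k)}(X,Y)$ against $P=(T\circ Q)/\|T\circ Q\|$, use positive homogeneity of $v_Q$, and invoke Lemma~\ref{Lemma3.7} (you even justify the homogeneity step, which the paper uses silently). The only divergence is in the second inequality: the paper argues by contradiction, interpolating a real number $a$ strictly between the two quantities, extracting for each fixed $\varepsilon\in(0,1)$ a minimizing sequence $(T_n)$ with $\|T_n\circ Q\|>1-\varepsilon$, passing to the limit to get $\inf\{v(T):\|T\circ Q\|>1-\varepsilon\}\geq a(1-\varepsilon)$, and then taking the supremum over $\varepsilon$ to contradict the choice of $a$. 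You instead prove the same core estimate directly: writing $\Lambda$ for the middle infimum, every $T$ with $\|T\circ Q\|>1-\varepsilon$ satisfies $v(T)\geq\|T\circ Q\|\,\Lambda\geq(1-\varepsilon)\Lambda$, whence $\inf_\varepsilon\geq(1-\varepsilon)\Lambda$ and $\sup_{\varepsilon>0}\inf_\varepsilon\geq\Lambda$. This is the same underlying inequality, but your direct route avoids the intermediate value, the sequences, and the limit passage, and is cleaner for it; your added observation that the last two quantities are in fact equal (by rescaling any $T$ with $T\circ Q\neq 0$ to $\|T\circ Q\|=1$, using homogeneity of $v$) goes slightly beyond what the paper records.
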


\begin{proof}
    By the previous lemma,
    $v_Q(T\circ Q)\leq v(T)$ for every $T \in \mathcal{L}(Y)$. Then,
    \begin{align*}
        n_Q^{(k)}(X,Y)&= \inf\{v_Q(P):P\in \mathcal{P}(^k X,Y), \|P\|=1\}\\
        &\leq \inf \left\{v_Q\left(\frac{T\circ Q}{\|T\circ Q\|}\right): T\in \mathcal{L}(Y), T\circ Q\neq 0 \right\}\\ 
        &= \inf \left\{\frac{v_Q\left(T\circ Q\right)}{\|T\circ Q\|}: T\in \mathcal{L}(Y), T\circ Q\neq 0 \right\}\\ 
        &{\leq} \inf \left\{\frac{v\left(T\right)}{\|T\circ Q\|}: T\in \mathcal{L}(Y), T\circ Q\neq 0 \right\}.
    \end{align*}
    
    Now consider the other inequality. Suppose that
       \begin{align*}
       \sup_{\varepsilon > 0} \inf \{ v(T) : T & \in \mathcal{L}(Y), \|T \circ Q\| > 1 - \varepsilon \} \\
       & < \inf \left\{ \frac{v(T)}{\|T \circ Q\|} : T \in \mathcal{L}(Y), T \circ Q \neq 0 \right\}.
       \end{align*}
       Thus, there exists $a \in \conjR$ such that
       \begin{align*}
       \sup_{\varepsilon > 0} \inf \{ v(T) : T & \in \mathcal{L}(Y), \|T \circ Q\| > 1 - \varepsilon \} < a \\
       & < \inf \left\{ \frac{v(T)}{\|T \circ Q\|} : T \in \mathcal{L}(Y), T \circ Q \neq 0 \right\}.
       \end{align*}
       Note that if $\varepsilon\geq 1$, then
       $$\inf \left\{ v(T) : T \in \mathcal{L}(Y), \|T \circ Q\| > 1 - \varepsilon \right\}=0.$$
       So we take $0< \varepsilon <1$. Consider $(T_n)\subseteq \mathcal{L}(Y)$ such that $\|T_n\circ Q\|>1-\varepsilon$ and
       $$v(T_n)\longrightarrow\inf\left\{ v(T) : T \in \mathcal{L}(Y), \|T \circ Q\| > 1 - \varepsilon \right\}$$
       when $n\rightarrow \infty$. Hence, there exists $n_0\in\conjN$ such that 
       $$v(T_n)<a<\frac{v(T_n)}{\|T_n\circ Q\|}<\frac{v(T_n)}{1-\varepsilon}$$
       for every $n\geq n_0$.
       Letting $n\rightarrow\infty$, we obtain 
       \begin{align*}
       \inf \{ v(T) : T \in \mathcal{L}(Y), \|T \circ Q\| & > 1 - \varepsilon \} < a \\
       & \leq \frac{\inf \{v(T): T \in \mathcal{L}(Y), \|T\circ Q\|>1-\varepsilon\}}{1-\varepsilon}.
       \end{align*}
       Since the function
       \[
       \varepsilon \mapsto \inf \left\{ v(T) : T \in \mathcal{L}(Y), \|T \circ Q\| > 1 - \varepsilon \right\}
       \]
       is non-decreasing as \( \varepsilon \to 0^+ \), we have
       \begin{align*}
       \sup_{\varepsilon>0}\inf \{ v(T) : T \in \mathcal{L}(Y), & \|T \circ Q\| > 1 - \varepsilon \} < a \\
       & \leq \sup_{\varepsilon>0}\inf\left\{ v(T) : T \in \mathcal{L}(Y), \|T \circ Q\| > 1 - \varepsilon \right\},
       \end{align*}
       which is a contradiction.
\end{proof}

The previous corollary immediately yields the following result.

\begin{corollary}\label{Lemma3.8}
    Let $X, Y$ be Banach spaces, let $Q \in \mathcal{P}(^k X, Y)$ with $\|Q\|=1$, and let $0\leq \alpha\leq 1$. If, for every $\varepsilon > 0$, there exists $T_\varepsilon \in \mathcal{L}(Y)$ such that $v(T_\varepsilon) \leq \alpha$ and $\|T_\varepsilon \circ Q \| > 1 - \varepsilon$, then
    $$n_Q^{(k)}(X,Y)\leq \alpha.$$
\end{corollary}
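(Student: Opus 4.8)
The plan is to deduce this directly from Corollary \ref{p.23}, which already provides the upper bound
$$
n_Q^{(k)}(X,Y) \leq \sup_{\varepsilon > 0} \inf \left\{ v(T) : T \in \mathcal{L}(Y),\ \|T \circ Q\| > 1 - \varepsilon \right\}.
$$
Thus it suffices to show that the right-hand side is at most $\alpha$, and the hypothesis is tailored precisely to control the inner infimum for each fixed $\varepsilon$.

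First I would fix an arbitrary $\varepsilon > 0$ and invoke the hypothesis to produce $T_\varepsilon \in \mathcal{L}(Y)$ with $v(T_\varepsilon) \leq \alpha$ and $\|T_\varepsilon \circ Q\| > 1 - \varepsilon$. The point is that $T_\varepsilon$ belongs to the set $\{ T \in \mathcal{L}(Y) : \|T \circ Q\| > 1 - \varepsilon \}$ over which the infimum is taken, so $T_\varepsilon$ is an admissible competitor and hence
$$
\inf \left\{ v(T) : T \in \mathcal{L}(Y),\ \|T \circ Q\| > 1 - \varepsilon \right\} \leq v(T_\varepsilon) \leq \alpha.
$$
Since $\varepsilon > 0$ was arbitrary, taking the supremum over all $\varepsilon > 0$ preserves this bound, giving
$$
\sup_{\varepsilon > 0} \inf \left\{ v(T) : T \in \mathcal{L}(Y),\ \|T \circ Q\| > 1 - \varepsilon \right\} \leq \alpha.
$$
Chaining this with Corollary \ref{p.23} yields $n_Q^{(k)}(X,Y) \leq \alpha$, which is the desired conclusion.

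There is no real obstacle here: the entire content is the observation that the hypothesis supplies, for every $\varepsilon$, a single admissible operator whose numerical radius is bounded by $\alpha$ independently of $\varepsilon$, so the uniform bound survives the supremum. The only point to state carefully is that the bound $v(T_\varepsilon) \leq \alpha$ does not depend on $\varepsilon$, which is exactly what allows the passage from the per-$\varepsilon$ estimate to the supremum without any loss.
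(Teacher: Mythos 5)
Your proof is correct and is exactly the argument the paper intends: the paper states this corollary follows immediately from Corollary \ref{p.23}, and your argument—using $T_\varepsilon$ as an admissible competitor to bound each inner infimum by $\alpha$, then passing to the supremum over $\varepsilon$—is precisely that deduction.
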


This result leads to some significant consequences.

\begin{proposition}
Let $X, Y$ be Banach spaces, let $Q \in \mathcal{P}(^k X, Y)$ with $\|Q\|=1$, and let $0 \leq \alpha \leq 1$. 
\begin{enumerate}
\item[i)] Consider $\mathcal{B}(\alpha) = \{T \in \mathcal{L}(Y) : \|T\| = 1, \, v(T) \leq \alpha \}$. If, for every $\varepsilon > 0$, the set
$$
\mathfrak{C} = \bigcup_{T \in \mathcal{B}(\alpha)} \{ y \in S_Y : \|T(y)\| > 1 - \varepsilon \}
$$  
is dense in $S_Y$, \textit{then} $n_Q^{(k)}(X,Y) \leq \alpha$.
\item[ii)] If there exists a surjective isometry $T \in \mathcal{L}(Y)$ with $v(T) \leq \alpha$, \textit{then} $n_Q^{(k)}(X,Y) \leq \alpha$.
\end{enumerate}
\end{proposition}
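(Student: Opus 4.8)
The plan is to reduce both parts to Corollary \ref{Lemma3.8}, whose hypothesis reads: for every $\varepsilon > 0$ there is $T_\varepsilon \in \mathcal{L}(Y)$ with $v(T_\varepsilon) \leq \alpha$ and $\|T_\varepsilon \circ Q\| > 1 - \varepsilon$. In both cases I will produce such a $T_\varepsilon$ drawn from the set $\mathcal{B}(\alpha)$, so that the bound $v(T_\varepsilon) \leq \alpha$ is automatic and the only genuine work is to estimate $\|T_\varepsilon \circ Q\|$ from below.

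For part i), fix $\varepsilon > 0$. Since $Q$ is $k$-homogeneous, $\|Q\| = \sup_{x \in S_X} \|Q(x)\| = 1$, so I can choose $x_0 \in S_X$ with $\|Q(x_0)\|$ as close to $1$ as desired, and set $\hat{y}_0 = Q(x_0)/\|Q(x_0)\| \in S_Y$. Applying the density hypothesis (with a defect parameter $\varepsilon'$ and an approximation tolerance $\eta$, both to be fixed small) to the point $\hat{y}_0$, I obtain a point $y \in S_Y$ with $\|y - \hat{y}_0\| < \eta$ together with some $T \in \mathcal{B}(\alpha)$ satisfying $\|T(y)\| > 1 - \varepsilon'$. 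Since $\|T\| = 1$, this gives $\|T(\hat{y}_0)\| \geq \|T(y)\| - \|y - \hat{y}_0\| > (1 - \varepsilon') - \eta$, whence
\[
\|T \circ Q\| \geq \|T(Q(x_0))\| = \|Q(x_0)\|\,\|T(\hat{y}_0)\| > \|Q(x_0)\|\,(1 - \varepsilon' - \eta).
\]
Choosing $\|Q(x_0)\|$ close to $1$ and $\varepsilon', \eta$ small makes the right-hand side exceed $1 - \varepsilon$; as $T \in \mathcal{B}(\alpha)$ yields $v(T) \leq \alpha$, the operator $T_\varepsilon := T$ verifies the hypothesis of Corollary \ref{Lemma3.8}, and the conclusion $n_Q^{(k)}(X,Y) \leq \alpha$ follows.

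For part ii), I observe that it is a special case of part i). If $T$ is a surjective isometry with $v(T) \leq \alpha$, then $\|T\| = 1$, so $T \in \mathcal{B}(\alpha)$, and $\|T(y)\| = \|y\| = 1 > 1 - \varepsilon$ for every $y \in S_Y$; hence $\mathfrak{C}$ equals all of $S_Y$ for each $\varepsilon > 0$ and is trivially dense. Alternatively, and more directly, since $T$ preserves norms one has $\|T \circ Q\| = \sup_{x \in B_X} \|T(Q(x))\| = \sup_{x \in B_X}\|Q(x)\| = \|Q\| = 1 > 1 - \varepsilon$ for all $\varepsilon > 0$, so $T_\varepsilon := T$ again satisfies the hypothesis of Corollary \ref{Lemma3.8}.

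The only delicate point lies in part i): the density hypothesis furnishes points of $S_Y$ approximating an arbitrary direction, whereas Corollary \ref{Lemma3.8} demands control along the range of $Q$. Bridging this gap is precisely the normalization step above, where I pass from the nearby point $y$ to the genuine image direction $\hat y_0 = Q(x_0)/\|Q(x_0)\|$ and absorb the two errors, the approximation $\eta$ and the defect $\varepsilon'$, together with the factor $\|Q(x_0)\|$ into a single estimate. The constant-chasing is routine, and I note in passing that the surjectivity assumed in part ii) is never used beyond ensuring $\|T\| = 1$.
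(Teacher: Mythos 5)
Your proof is correct and follows essentially the same route as the paper's: both parts reduce to Corollary \ref{Lemma3.8} by picking $x_0 \in S_X$ with $\|Q(x_0)\|$ near $1$, using density of $\mathfrak{C}$ to find a nearby $y$ and an operator $T \in \mathcal{B}(\alpha)$, and transferring the estimate $\|T(y)\| > 1-\varepsilon'$ to $\|T \circ Q\|$ (the paper bounds $\|y - Q(x_0)\|$ by the triangle inequality where you instead use linearity to factor out $\|Q(x_0)\|$, a cosmetic difference). Your observation that part ii) needs only that $T$ be an isometry, not surjective, and your alternative direct verification $\|T \circ Q\| = \|Q\| = 1$ are both sound.
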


\begin{proof}
   We begin with the proof of \textit{i)}. Since $\|Q\|=1$, for every $\varepsilon > 0$, we may choose $x \in S_X$ such that $\|Q(x)\| > 1 - \varepsilon/3$.
  As $\frac{Q(x)}{\|Q(x)\|}\in S_Y$, by the hypothesis, there exists $y\in \mathfrak{C}$ such that 
  $$\left\|y - \frac{Q(x)}{\|Q(x)\|}\right\| < \varepsilon/3.$$
  Since $y\in \mathfrak{C}$, there is $T_\varepsilon \in \mathcal{L}(Y)$ with $\|T_\varepsilon\| = 1$, $v(T_\varepsilon) \leq \alpha$, and $\|T_\varepsilon (y)\| > 1 - \varepsilon/3$. Now, observe that
  $$\|y - Q(x)\| \leq \left\|y - \frac{Q(x)}{\|Q(x)\|}\right\|+\left\|\frac{Q(x)}{\|Q(x)\|}-Q(x)\right\|<\frac{\varepsilon}{3}+1-\|Q(x)\| < \frac{2\varepsilon}{3}.$$ 
  Therefore,
$$
\|T_\varepsilon (Q(x))\| \geq \|T_\varepsilon (y)\| - \|T_\varepsilon (y - Q(x))\| > 1 - \frac{\varepsilon}{3} - \frac{2\varepsilon}{3} = 1 - \varepsilon.
$$
Hence, $\|T_\varepsilon\circ Q\|>1-\varepsilon$, and the result follows from Corollary \ref{Lemma3.8}.

For \textit{ii)}, note that if \( T \in \mathcal{L}(Y) \) is a surjective isometry with \( v(T) \leq \alpha \), then \( T \in \mathcal{B}(\alpha) \) and
$$\{y\in S_Y: \|T(y)\|>1-\varepsilon\}=S_Y.$$ 
Thus, $\mathfrak{C}=S_Y$ and the result follows from part \textit{i)}.
\end{proof}

When $\alpha=0$, we obtain a stronger result.

\begin{proposition}\label{Prop3.10}
Let $X, Y$ be Banach spaces, and let $Q \in \mathcal{P}(^k X, Y)$ with $\|Q\|=1$. 
   \begin{enumerate}
       \item[i)] If there exists $T\in\mathcal{L}(Y)$ with $v(T)=0$ and $T\circ Q\neq 0$, then $n^{(k)}_Q(X,Y)=0$.
       \item[ii)] If  $$
\mathfrak{D}=\bigcap_{T \in \mathcal{L}(Y), \, v(T) = 0} \ker T = \{0\},$$
then $n_Q^{(k)}(X,Y)=0$.
       \end{enumerate}
\end{proposition}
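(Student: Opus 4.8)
The plan is to prove \textit{ii)} by reducing it to \textit{i)}, and to prove \textit{i)} as a direct consequence of Lemma \ref{Lemma3.7} together with the positive homogeneity of the numerical radius $v_Q$.

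For part \textit{i)}, I would start from an operator $T \in \mathcal{L}(Y)$ with $v(T) = 0$ and $T \circ Q \neq 0$. By Lemma \ref{Lemma3.7} we have $v_Q(T \circ Q) \leq v(T) = 0$, so $v_Q(T \circ Q) = 0$. Since $V_Q(cP) = c\, V_Q(P)$ for every $c > 0$, the functional $v_Q$ is positively homogeneous; hence the normalized polynomial $P_0 = (T \circ Q)/\|T \circ Q\|$ has norm one and satisfies $v_Q(P_0) = v_Q(T \circ Q)/\|T \circ Q\| = 0$. Taking the infimum defining $n_Q^{(k)}(X,Y)$ over all norm-one polynomials then yields $n_Q^{(k)}(X,Y) \leq v_Q(P_0) = 0$, and since this index is nonnegative we conclude $n_Q^{(k)}(X,Y) = 0$. (Equivalently, one may rescale $T$ so that $\|T \circ Q\| = 1$ and invoke Corollary \ref{Lemma3.8} with $\alpha = 0$.)

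For part \textit{ii)}, the strategy is to produce a single operator satisfying the hypothesis of \textit{i)}. Since $\|Q\| = 1$, there exists $x_0 \in X$ with $Q(x_0) \neq 0$. Because $\mathfrak{D} = \{0\}$, the nonzero vector $Q(x_0)$ cannot lie in $\bigcap_{v(T)=0} \ker T$; therefore there is some $T \in \mathcal{L}(Y)$ with $v(T) = 0$ and $T(Q(x_0)) \neq 0$. Then $(T \circ Q)(x_0) = T(Q(x_0)) \neq 0$, so $T \circ Q \neq 0$, and part \textit{i)} applies to give $n_Q^{(k)}(X,Y) = 0$.

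There is essentially no deep obstacle in either part; the only points requiring care are the positive homogeneity of $v_Q$ (which is what lets me normalize $T \circ Q$ without disturbing the vanishing of its numerical radius) and, in \textit{ii)}, the clean logical translation of the condition $\mathfrak{D} = \{0\}$ into the existence of a witnessing operator $T$ for \textit{i)}. Once these two observations are recorded, both statements follow at once.
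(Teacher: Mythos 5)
Your proof is correct and follows essentially the same route as the paper: part \textit{i)} is exactly the content of Corollary \ref{p.23} (which the paper cites directly, and which is itself proved from Lemma \ref{Lemma3.7} together with the positive homogeneity of $v_Q$, precisely the two ingredients you use), and your part \textit{ii)} coincides with the paper's argument word for word. Your parenthetical alternative via Corollary \ref{Lemma3.8} with $\alpha=0$ is also valid.
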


    \begin{proof}
        Item \textit{i)} follows directly from Corollary \ref{p.23}. 
        For item \textit{ii)}, since $Q\neq 0$, there exists $x\in X$ such that $Q(x)\neq 0$. Then, $Q(x)\notin \mathfrak{D}$, and so there exists $T\in \mathcal{L}(Y)$, with $v(T)=0$ and $T(Q(x))\neq 0$. Hence, the result follows from item \textit{i)}.
    \end{proof}

To conclude the section, we present the following direct consequence of the previous result.

    \begin{corollary}\label{Cor3.11}
        Let $X, Y$ be Banach spaces. If there is an isometry $J\in \mathcal{L}(Y)$ with $v(J)=0$, then
        $$n_Q^{(k)}(X,Y)=0$$
        for every norm-one $Q\in\mathcal{P}(^kX,Y)$.
    \end{corollary}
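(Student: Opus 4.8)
The plan is to derive this corollary directly from Proposition \ref{Prop3.10}, exploiting the single elementary fact that any isometry is injective. Concretely, if $J \in \mathcal{L}(Y)$ is an isometry, then $J(y) = 0$ implies $\|y\| = \|J(y)\| = 0$, so $y = 0$; that is, $\ker J = \{0\}$. This is the only observation the argument really needs, and it is immediate.

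Granting this, I would argue via part ii) of Proposition \ref{Prop3.10}. Since $v(J) = 0$ by hypothesis, the operator $J$ is one of the operators appearing in the intersection
$$\mathfrak{D} = \bigcap_{T \in \mathcal{L}(Y),\, v(T) = 0} \ker T.$$
Consequently $\mathfrak{D} \subseteq \ker J = \{0\}$, which forces $\mathfrak{D} = \{0\}$. Applying Proposition \ref{Prop3.10} ii) then yields $n_Q^{(k)}(X,Y) = 0$ for every norm-one $Q \in \mathcal{P}(^k X, Y)$, as desired.

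Alternatively, one could invoke part i) of the same proposition and avoid discussing $\mathfrak{D}$ altogether: given a norm-one polynomial $Q$, we have $Q \neq 0$, so there is some $x \in X$ with $Q(x) \neq 0$; injectivity of $J$ then gives $J(Q(x)) \neq 0$, whence $J \circ Q \neq 0$, and part i) applies to give the same conclusion. Either route is essentially instantaneous, and there is no genuine obstacle here—the substantive content lives entirely in the preceding proposition. This corollary merely records the useful special case in which the vanishing-numerical-radius operator witnessing $n_Q^{(k)}(X,Y) = 0$ can be taken to be an isometry, the point being that isometries are automatically injective and hence automatically satisfy the kernel hypotheses of Proposition \ref{Prop3.10}.
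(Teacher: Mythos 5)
Your proposal is correct and matches the paper's intent exactly: the paper presents this corollary as a direct consequence of Proposition \ref{Prop3.10}, relying precisely on the observation that an isometry $J$ has $\ker J = \{0\}$, so that either part i) (via $J \circ Q \neq 0$) or part ii) (via $\mathfrak{D} \subseteq \ker J = \{0\}$) applies. Both of your routes are valid and essentially the same argument the paper has in mind.
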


\section{Stability results}

In this section, we investigate the stability of the $n_{Q}^{(k)}(X, Y)$ under $c_0$-, $\ell_\infty$-, and $\ell_1$-sums. The proof of the following proposition is based on the the argument presented in \cite[Proposition 6.1]{KMMPQ20}.

\begin{proposition}\label{Prop6.1}
    Let $(X_\lambda)_{\lambda \in \Lambda}$, $(Y_\lambda)_{\lambda \in \Lambda}$ be two families of Banach spaces and let $Q_\lambda \in \mathcal{P}(^k X_\lambda, Y_\lambda)$ be a norm-one polynomial for every $\lambda \in \Lambda$. Consider $E$ as one of the Banach spaces $c_0$, $\ell_\infty$ or $\ell_1$, $X = \left[ \bigoplus_{\lambda \in \Lambda} X_\lambda \right]_E$, and $Y = \left[ \bigoplus_{\lambda \in \Lambda} Y_\lambda \right]_E$. Define the $k$-homogeneous polynomial $Q: X \rightarrow Y$ by
    $$Q[(x_\lambda)_{\lambda \in \Lambda}] = (Q_\lambda(x_\lambda))_{\lambda \in \Lambda}$$
    for every $(x_\lambda)_{\lambda \in \Lambda} \in \left[ \bigoplus_{\lambda \in \Lambda} X_\lambda \right]_E$. Then
    $$n_{Q}^{(k)}(X, Y) \leq \inf_{\lambda} n_{Q_\lambda}^{(k)}(X_\lambda, Y_\lambda).$$
\end{proposition}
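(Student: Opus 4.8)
The plan is to bound $n_Q^{(k)}(X,Y)$ one coordinate at a time: it suffices to prove that $n_Q^{(k)}(X,Y) \le n_{Q_\mu}^{(k)}(X_\mu, Y_\mu)$ for each fixed $\mu \in \Lambda$, and then pass to the infimum over $\mu$. Fix $\mu$ and take $P_\mu \in \mathcal{P}(^k X_\mu, Y_\mu)$ with $\|P_\mu\| = 1$. I would define $P \in \mathcal{P}(^k X, Y)$ by placing $P_\mu$ in the $\mu$-th coordinate and zero elsewhere, that is,
$$P[(x_\lambda)_{\lambda\in\Lambda}] = (\delta_{\lambda\mu}\,P_\mu(x_\mu))_{\lambda\in\Lambda}.$$
This is a $k$-homogeneous polynomial, and since the canonical projection $X \to X_\mu$ has norm one while the inclusion $Y_\mu \hookrightarrow Y$ is an isometry, testing on vectors supported in the $\mu$-th coordinate gives $\|P\| = \|P_\mu\| = 1$. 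The goal then reduces to the single inequality $v_Q(P) \le v_{Q_\mu}(P_\mu)$, because this yields $n_Q^{(k)}(X,Y) \le v_Q(P) \le v_{Q_\mu}(P_\mu)$, and taking the infimum over all norm-one $P_\mu$ gives $n_Q^{(k)}(X,Y) \le n_{Q_\mu}^{(k)}(X_\mu, Y_\mu)$.

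The heart of the argument is the norm estimate
$$\|Q + \alpha\theta P\| \le \max\{1,\ \|Q_\mu + \alpha\theta P_\mu\|\},$$
valid for all $\alpha > 0$ and $\theta \in \conjT$, in each of the three sum types. For $x = (x_\lambda) \in B_X$, the polynomial $Q + \alpha\theta P$ acts as $Q_\lambda(x_\lambda)$ on coordinates $\lambda \ne \mu$ and as $(Q_\mu + \alpha\theta P_\mu)(x_\mu)$ on the $\mu$-th coordinate. For $E = c_0$ or $\ell_\infty$ the ambient norm is a supremum, so $\|(Q+\alpha\theta P)(x)\|$ is the maximum of $\sup_{\lambda\ne\mu}\|Q_\lambda(x_\lambda)\|$ and $\|(Q_\mu+\alpha\theta P_\mu)(x_\mu)\|$; bounding the first terms by $\|x_\lambda\|^k \le 1$ and the last by $\|Q_\mu+\alpha\theta P_\mu\|\,\|x_\mu\|^k$ gives the estimate at once. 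For $E = \ell_1$ the norm is a sum and the coordinates are coupled through the constraint $\sum_\lambda \|x_\lambda\| \le 1$; here I would use $k$-homogeneity together with the inequality $\sum_\lambda \|x_\lambda\|^k \le \left(\sum_\lambda \|x_\lambda\|\right)^k \le 1$ (valid for $k \ge 1$) to factor out $\max\{1,\|Q_\mu + \alpha\theta P_\mu\|\}$ and recover the same bound.

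With the estimate in hand I would invoke Proposition \ref{Lemma3.2}. Since $\frac{\|Q+\alpha\theta P\|-1}{\alpha} \le \max\left\{0,\ \frac{\|Q_\mu+\alpha\theta P_\mu\|-1}{\alpha}\right\}$ for every $\alpha$ and $\theta$, letting $\alpha \to 0^+$ and using the continuity of $t \mapsto \max\{0,t\}$ gives, for each $\theta$, a bound by $\max\{0, L(\theta)\}$, where $L(\theta)$ is the one-sided limit appearing in Proposition \ref{Lemma3.2}. Taking the maximum over $\theta \in \conjT$ and recalling that numerical radii are nonnegative, I obtain $v_Q(P) \le \max\{0, v_{Q_\mu}(P_\mu)\} = v_{Q_\mu}(P_\mu)$, which finishes the reduction. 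I expect the main obstacle to be the $\ell_1$ case of the norm estimate: unlike the supremum-norm cases the coordinates no longer decouple, so one must exploit homogeneity and the convexity inequality $\sum_\lambda \|x_\lambda\|^k \le \left(\sum_\lambda\|x_\lambda\|\right)^k$ to recover the clean bound $\max\{1,\|Q_\mu+\alpha\theta P_\mu\|\}$ uniformly across all three sums.
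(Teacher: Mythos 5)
Your proof is correct, and it takes a genuinely different route for the key step. The construction is identical to the paper's: both you and the authors embed a norm-one $P_\mu\in\mathcal{P}(^kX_\mu,Y_\mu)$ as the polynomial $P$ supported in the $\mu$-th coordinate, note $\|P\|=\|P_\mu\|=1$, and reduce everything to the single inequality $v_Q(P)\le v_{Q_\mu}(P_\mu)$. But where the paper proves this spatially --- taking $\delta$-approximate pairs $x=(x_{\lambda_0},w)\in S_X$, $y^*=(y^*_{\lambda_0},z^*)\in S_{Y^*}$ for $Q$ and showing, after normalization with the rescaled parameter $\widehat{\delta}=2\delta/v_Q(P)$ and a separate treatment of the degenerate case $v_Q(P)=0$, that $x_{\lambda_0}/\|x_{\lambda_0}\|$ and $y^*_{\lambda_0}/\|y^*_{\lambda_0}\|$ are admissible for $Q_{\lambda_0}$, yielding $v_{Q,\delta}(P)\le v_{Q_{\lambda_0},\widehat{\delta}}(R)$ --- you instead prove the norm estimate $\|Q+\alpha\theta P\|\le\max\{1,\|Q_\mu+\alpha\theta P_\mu\|\}$ (uniform over the three sums; in the $\ell_1$ case via $\sum_\lambda\|x_\lambda\|^k\le(\sum_\lambda\|x_\lambda\|)^k$) and then pass to the one-sided derivative of the norm using Proposition \ref{Lemma3.2}, together with nonnegativity of the numerical radius. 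Both steps are sound. Your route is shorter, handles $c_0$, $\ell_\infty$ and $\ell_1$ in one stroke with no $\delta$-bookkeeping and no special case at $v_Q(P)=0$, and moreover the same estimate with $\sum_\lambda\|x_\lambda\|^{kp}\le(\sum_\lambda\|x_\lambda\|^p)^k$ covers $\ell_p$-sums, so it proves Proposition \ref{Cor6.3} with no extra effort. What the paper's spatial argument buys is the stronger conclusion that the embedding preserves the numerical radius exactly, $v_Q(P)=v_{Q_{\lambda_0}}(R)$; note, though, that your method recovers this too if desired, since restricting to vectors supported in the $\mu$-th coordinate gives the reverse estimate $\|Q+\alpha\theta P\|\ge\|Q_\mu+\alpha\theta P_\mu\|$, hence $v_Q(P)\ge v_{Q_\mu}(P_\mu)$ by the same proposition.
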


\begin{proof}
    Fix $\lambda_0 \in \Lambda$. We will prove that $n_{Q}^{(k)}(X, Y) \leq n_{Q_{\lambda_0}}^{(k)}(X_{\lambda_0}, Y_{\lambda_0})$. Setting $W=\left[ \bigoplus_{\lambda \neq \lambda_0} X_\lambda \right]_E$ and $Z=\left[ \bigoplus_{\lambda \neq \lambda_0} Y_\lambda \right]_E$, we can write $X = X_{\lambda_0} \oplus_\infty W$ and $Y = Y_{\lambda_0} \oplus_\infty Z$ when $E$ is $c_0$ or $\ell_\infty$, and $X = X_{\lambda_0} \oplus_1 W$ and $Y = Y_{\lambda_0} \oplus_1 Z$ when $E$ is $\ell_1$. Given $R \in \mathcal{P}(^k X_{\lambda_0}, Y_{\lambda_0})$ with $\|R\| = 1$, define $P \in \mathcal{P}(^k X, Y)$ by
    $$P (x_{\lambda_0},w) = (R(x_{\lambda_0}), 0)$$
    for $x_{\lambda_0} \in X_{\lambda_0}$ and $w \in W$. Note that $\|P\| = \|R\| = 1$. We claim that $v_{Q_{\lambda_0}}(R) = v_Q(P)$.
    
    Let us first show that $v_{Q_{\lambda_0}}(R) \leq v_Q(P)$. Fix $\delta > 0$, $x_{\lambda_0} \in S_{X_{\lambda_0}}$, and $y_{\lambda_0}^* \in S_{Y_{\lambda_0}^*}$ satisfying $\re y_{\lambda_0}^*(Q_{\lambda_0}(x_{\lambda_0})) > 1 -\delta$. Define $x = (x_{\lambda_0}, 0) \in S_X$ and $y^* = (y_{\lambda_0}^*, 0) \in S_{Y^*}$. Then,
    $$\re y^*(Q(x)) = \re y_{\lambda_0}^*(Q_{\lambda_0}(x_{\lambda_0})) > 1 - \delta$$
    and
    $$|y_{\lambda_0}^*(R(x_{\lambda_0}))| = |y^* (P(x))| \leq v_{Q, \delta}(P).$$
    So, $v_{Q_{\lambda_0}, \delta}(R) \leq v_{Q, \delta}(P)$ and the desired inequality follows by letting $\delta \downarrow 0$.
    
    To obtain the reverse inequality, we may assume $v_Q(P) > 0$. Otherwise, $v_Q(P) = 0$ and the inequality follows directly. Given $\delta > 0$, it is sufficient to prove $v_{Q, \delta}(P) \leq v_{Q_{\lambda_0}, \widehat{\delta}}(R)$, where $\widehat{\delta} = 2\delta/v_Q(P)$. In fact, suppose instead that $v_{Q, \delta}(P) \leq v_{Q_{\lambda_0}, \widehat{\delta}}(R)$ for every $\delta > 0$, and $v_{Q_{\lambda_0}}(R) < v_Q(P)$. Then there exists $\eta > 0$ such that
    $$v_{Q_{\lambda_0}, \eta}(R) < v_{Q, \delta}(P),$$
    for all $\delta > 0$. Taking $\delta = \eta v_Q(P)/ 2 > 0$, we get 
    $$v_{Q_{\lambda_0}, \eta}(R) < v_{Q, \delta}(P) \leq v_{Q_{\lambda_0}, \widehat{\delta}}(R),$$
    where $\widehat{\delta} = 2\delta/v_Q(P) = \eta$, which is a contradiction. Hence, if $v_{Q, \delta}(P) \leq v_{Q_{\lambda_0}, \widehat{\delta}}(R)$ for every $\delta > 0$, then $v_{Q_{\lambda_0}}(R) \geq v_Q(P)$.
    
    Now, let $0 < \varepsilon < v_Q(P)/2$. There exist $x = (x_{\lambda_0}, w) \in S_X$ and $y^* = (y_{\lambda_0}^*, z^*) \in S_{Y^*}$ such that 
    $$|y^*(P(x))| > v_{Q, \delta}(P) - \varepsilon \geq v_Q(P) - \varepsilon > \frac{v_Q(P)}{2}$$
    and
    \begin{align*}
    1 - \delta & < \re y^*(Q(x)) = \re y_{\lambda_0}^*(Q_{\lambda_0}(x_{\lambda_0})) + \re z^*((Q_\lambda(w_\lambda))_{\lambda \neq \lambda_0}) \\
    & \leq \re y_{\lambda_0}^*(Q_{\lambda_0}(x_{\lambda_0})) + \|z^*\| \|w\|^k \leq \re y_{\lambda_0}^*(Q_{\lambda_0}(x_{\lambda_0})) + \|z^*\| \|w\|.
    \end{align*}
    Also,
    $$\|y_{\lambda_0}^*\| \|x_{\lambda_0}\| + \|z^*\| \|w\| \leq \|y^*\| \|x\| = 1.$$
    Thus,
    \begin{align*}
    \|y_{\lambda_0}^*\| \|x_{\lambda_0}\|^k + \|z^*\| \|w\| - \delta & \leq \|y_{\lambda_0}^*\| \|x_{\lambda_0}\| + \|z^*\| \|w\| - \delta \\
    & \leq 1 - \delta < \re y_{\lambda_0}^*(Q_{\lambda_0}(x_{\lambda_0})) + \|z^*\| \|w\|,
    \end{align*}
    implying $\re y_{\lambda_0}^*(Q_{\lambda_0}(x_{\lambda_0})) > \|y_{\lambda_0}^*\| \|x_{\lambda_0}\|^k - \delta$. Since
    $$0 < \frac{v_Q(P)}{2} < |y^*(P(x))| = |y_{\lambda_0}^*(R(x_{\lambda_0}))| \leq \|y_{\lambda_0}^*\| \|x_{\lambda_0}\|^k,$$
    we have
    $$\re \frac{y_{\lambda_0}^*}{\|y_{\lambda_0}^*\|} \left( Q_{\lambda_0} \left(\frac{x_{\lambda_0}}{\|x_{\lambda_0}\|} \right) \right) > 1 - \frac{\delta}{\|y_{\lambda_0}^*\| \|x_{\lambda_0}\|^k} > 1 - \frac{2 \delta}{v_Q(P)} = 1 - \widehat{\delta}.$$
    Consequently,
    \begin{align*}
    v_{Q, \delta}(P) - \varepsilon & < |y^*(P(x))| = |y_{\lambda_0}^*(R(x_{\lambda_0}))| \\
    & \leq \left| \frac{y_{\lambda_0}^*}{\|y_{\lambda_0}^*\|} \left( R \left(\frac{x_{\lambda_0}}{\|x_{\lambda_0}\|} \right) \right) \right| \leq v_{Q_{\lambda_0}, \widehat{\delta}}(R).
    \end{align*}
    Letting $\varepsilon \downarrow 0$, we obtain $v_{Q, \delta}(P) \leq v_{Q_{\lambda_0}, \widehat{\delta}}(R)$.

    In summary, we showed that given $R \in \mathcal{P}(^k X_{\lambda_0}, Y_{\lambda_0})$ with $\|R\|=1$, there is $P \in \mathcal{P}(^k X, Y)$ with $\|P\| = 1$ and $v_Q(P) = v_{Q_{\lambda_0}}(R)$. Therefore,
    $$n_{Q}^{(k)}(X, Y) \leq v_Q(P) = v_{Q_{\lambda_0}}(R)$$
    and taking the infimum over all such \( R \) yields
    $n_{Q}^{(k)}(X, Y) \leq n_{Q_{\lambda_0}}^{(k)}(X_{\lambda_0}, Y_{\lambda_0})$.
\end{proof}

\begin{remark}
The equality in Proposition \ref{Prop6.1} does not hold in general. In fact, the equality holds in the case $k=1$ \cite[Proposition 6.1]{KMMPQ20}, but it fails for $k = 2$ as we will prove.
Consider $\ell_1^2$ as $(\mathbb{K}^2, \| \cdot \|_1)$. Define $Q_1, Q_2: \mathbb{K} \to \mathbb{K}$ by $Q_1(a) = a^2 = Q_2(a)$. Note that $Q_1, Q_2$ are norm-one $2$-homogeneous polynomials and that $n_{Q_1}^{(2)}(\conjK, \conjK) = n_{Q_2}^{(2)}(\conjK, \conjK) = 1$, by Proposition \ref{PropK}. Taking $Q: \ell_1^2 \to \ell_1^2$ as in Proposition \ref{Prop6.1}, that is,
$$Q(x_1,x_2) = (x_1^2, x_2^2),$$
we will verify that 
$$n_Q^{(2)}(\ell_1^2, \ell_1^2) \leq \frac{1}{2} < 1 = \min \left\{ n_{Q_1}^{(2)}(\conjK, \conjK), n_{Q_2}^{(2)}(\conjK, \conjK) \right\}.$$

Let $P \in \mathcal{P}(^2 \ell_1^2, \ell_1^2)$ defined by  
$$
P(x_1,x_2) = \left( \frac{x_1^2}{2} + 2x_1x_2, - \frac{x_2^2}{2} - x_1x_2 \right).
$$
We have that $P$ is a 2-homogeneous polynomial because it is associated with the 2-linear mapping $A: \ell_1^2 \times \ell_1^2 \rightarrow \ell_1^2$ given by  
$$
    A((x_1, x_2), (y_1, y_2)) = \left( \frac{x_1y_1}{2} + 2x_1y_2, -\frac{x_2y_2}{2} - x_1y_2 \right).
$$
Let us prove that $\|P\| = 1$ and $v_Q(P) = \frac{1}{2}$. Observe that  
\begin{align*}
    \|P\| & = \sup \left\{ \left\|\left( \frac{x_1^2}{2} + 2x_1x_2, - \frac{x_2^2}{2} - x_1x_2 \right)\right\| \colon (x_1, x_2) \in B_{\ell_1^2} \right\}\\
    & = \sup \left\{ \left| \frac{x_1^2}{2} + 2x_1x_2 \right|+\left| - \frac{x_2^2}{2} - x_1x_2 \right| \colon (x_1, x_2) \in B_{\ell_1^2} \right\}.
\end{align*}
For all $(x_1, x_2) \in B_{\ell_1^2}$, 
\begin{align*}
    \left| \frac{x_1^2}{2} + 2x_1x_2 \right| + \left| - \frac{x_2^2}{2} - x_1x_2 \right| & \leq \frac{1}{2} (|x_1| + |x_2|)^2 + 2|x_1||x_2|\\
    & \leq \frac{1}{2} + 2|x_1||x_2|\\
    & \leq \frac{1}{2} +2(1-|x_2|)|x_2|\\ 
    & =\frac{1}{2}-2\left[\left(|x_2|-\frac{1}{2}\right)^2-\frac{1}{4}\right]\\ 
    & = 1 - 2 \left( |x_2| - \frac{1}{2} \right)^2 \leq 1.
\end{align*}
Thus,
$$
\|P\| = \sup \left\{ \left| \frac{x_1^2}{2} + 2x_1x_2 \right| + \left| - \frac{x_2^2}{2} - x_1x_2 \right| \colon (x_1, x_2) \in B_{\ell_1^2} \right\} \leq 1.
$$
On the other hand, consider $x = \left( \frac{1}{2}, \frac{1}{2} \right) \in B_{\ell_1^2}$. Hence, $\|P(x)\| = 1$. Therefore, $\|P\| = 1$.

Now, we will show that $v_Q(P) = \frac{1}{2}$. Let $(x, y^*)$ be such that $x = (x_1, x_2) \in S_{\ell_1^2}, y^* \in S_{(\ell_1^2)^*}$, and $y^*(Q(x)) = 1$. Since $(\ell_1^2)^* = \ell_\infty^2$ and $y^* \in S_{(\ell_1^2)^*}$, there exists $b = (b_1, b_2) \in S_{\ell_\infty^2}$ such that $y^*(y_1, y_2) = y_1 b_1 + y_2 b_2$ for all $(y_1, y_2) \in \ell_1^2$. We know that  
$$
y^*(Q(x)) = \sum_{j=1}^{2} x_j^2 b_j = 1.
$$
Hence,  
$$
\sum_{j=1}^{2} |x_j| = 1 = \left|\sum_{j=1}^{2} x_j^2 b_j\right|
\leq \sum_{j=1}^{2} |x_j|^2 |b_j| 
\leq \sum_{j=1}^{2} |x_j|^2 
\leq \sum_{j=1}^{2} |x_j|,
$$
that implies
$$
|x_1|+|x_2| = |x_1|^2+|x_2|^2.
$$
Since $|x_1|+|x_2| = 1$, we have only two possibilities. The first case is $|x_1| = 1$ and $|x_2|=0$. And the second is $|x_1| = 0$ and $|x_2|=1$. In both cases, $|y^*(P(x))| = \frac{1}{2}$. Consequently, $v_Q(P) = \frac{1}{2}$ by Proposition \ref{Prop3.4}. 

Therefore, $n_Q^{(2)}{(\ell_1^2, \ell_1^2)} \leq \frac{1}{2}$.
\end{remark}

We observe that the proof of Proposition \ref{Prop6.1} also applies to \(\ell_p\)-sums.

\begin{proposition}\label{Cor6.3}
    Let $(X_\lambda)_{\lambda \in \Lambda}$, $(Y_\lambda)_{\lambda \in \Lambda}$ be two families of Banach spaces, let $Q_\lambda \in \mathcal{P}(^k X_\lambda, Y_\lambda)$ be a norm-one polynomial for every $\lambda \in \Lambda$, let $1 < p < \infty$, and let $X = \left[ \bigoplus_{\lambda \in \Lambda} X_\lambda \right]_{\ell_p}$ and $Y = \left[ \bigoplus_{\lambda \in \Lambda} Y_\lambda \right]_{\ell_p}$. Define the $k$-homogeneous polynomial $Q: X \rightarrow Y$ by
    $$Q[(x_\lambda)_{\lambda \in \Lambda}] = (Q_\lambda(x_\lambda))_{\lambda \in \Lambda}$$
    for every $(x_\lambda)_{\lambda \in \Lambda} \in \left[ \bigoplus_{\lambda \in \Lambda} X_\lambda \right]_{\ell_p}$. Then
    $$n_{Q}^{(k)}(X, Y) \leq \inf_{\lambda} n_{Q_\lambda}^{(k)}(X_\lambda, Y_\lambda).$$
\end{proposition}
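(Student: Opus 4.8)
The plan is to repeat the argument of Proposition \ref{Prop6.1} \emph{mutatis mutandis}, checking at each step that the $\ell_p$-structure supports the estimates that, in the $c_0$-, $\ell_\infty$- and $\ell_1$-cases, followed at once from the max/sum form of the norm. Fix $\lambda_0 \in \Lambda$ and set $W = \left[\bigoplus_{\lambda \neq \lambda_0} X_\lambda\right]_{\ell_p}$ and $Z = \left[\bigoplus_{\lambda \neq \lambda_0} Y_\lambda\right]_{\ell_p}$, so that $X = X_{\lambda_0} \oplus_p W$ and $Y = Y_{\lambda_0} \oplus_p Z$. Letting $q$ be the conjugate exponent of $p$, one has the duality $Y^* = Y_{\lambda_0}^* \oplus_q Z^*$. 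Given $R \in \mathcal{P}(^k X_{\lambda_0}, Y_{\lambda_0})$ with $\|R\| = 1$, I would define $P \in \mathcal{P}(^k X, Y)$ by $P(x_{\lambda_0}, w) = (R(x_{\lambda_0}), 0)$; since the embedding $x_{\lambda_0} \mapsto (x_{\lambda_0}, 0)$ is isometric in an $\ell_p$-sum, $\|P\| = \|R\| = 1$. As before, the whole matter reduces to establishing $v_{Q_{\lambda_0}}(R) = v_Q(P)$.

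The inequality $v_{Q_{\lambda_0}}(R) \leq v_Q(P)$ goes through exactly as in Proposition \ref{Prop6.1}: given admissible $x_{\lambda_0} \in S_{X_{\lambda_0}}$ and $y_{\lambda_0}^* \in S_{Y_{\lambda_0}^*}$, the pair $x = (x_{\lambda_0}, 0)$, $y^* = (y_{\lambda_0}^*, 0)$ is admissible for $P$ with the same real part and the same modulus, whence $v_{Q_{\lambda_0}, \delta}(R) \leq v_{Q, \delta}(P)$ for every $\delta > 0$. For the reverse inequality I would follow the identical reduction to showing $v_{Q, \delta}(P) \leq v_{Q_{\lambda_0}, \widehat{\delta}}(R)$ with $\widehat{\delta} = 2\delta/v_Q(P)$, choosing $x = (x_{\lambda_0}, w) \in S_X$ and $y^* = (y_{\lambda_0}^*, z^*) \in S_{Y^*}$ that nearly attain $v_{Q, \delta}(P)$ while satisfying $\re y^*(Q(x)) > 1 - \delta$.

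The two places where the $\ell_p$-geometry must do genuine work — and hence where I expect the only real obstacle to lie — are precisely the estimates that were immediate in the earlier proof. First, one needs $\|(Q_\lambda(w_\lambda))_{\lambda \neq \lambda_0}\| \leq \|w\|^k$; writing $a_\lambda = \|w_\lambda\|^p$ and using $\|Q_\lambda\| = 1$, this reduces to $\sum_\lambda a_\lambda^k \leq \left(\sum_\lambda a_\lambda\right)^k$, valid for the integer $k \geq 1$ because the $\ell_1$-norm dominates the $\ell_k$-norm. Second, in place of the trivial bound coming from the max/sum norms, one invokes the two-term Hölder inequality
$$
\|y_{\lambda_0}^*\|\,\|x_{\lambda_0}\| + \|z^*\|\,\|w\| \leq \left(\|y_{\lambda_0}^*\|^q + \|z^*\|^q\right)^{1/q}\left(\|x_{\lambda_0}\|^p + \|w\|^p\right)^{1/p} = \|y^*\|\,\|x\| = 1.
$$
With these two estimates in hand, the chain bounding $\re y_{\lambda_0}^*(Q_{\lambda_0}(x_{\lambda_0}))$ from below (using $\|x_{\lambda_0}\|^k \leq \|x_{\lambda_0}\|$), the normalization of $x_{\lambda_0}$ and $y_{\lambda_0}^*$ — legitimate since $v_Q(P) > 0$ forces both to be nonzero — and the $k$-homogeneity of $Q_{\lambda_0}$ all transfer verbatim, yielding $v_{Q, \delta}(P) \leq v_{Q_{\lambda_0}, \widehat{\delta}}(R)$. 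Letting $\delta \downarrow 0$ gives $v_{Q_{\lambda_0}}(R) = v_Q(P)$, and taking the infimum over all norm-one $R$ delivers $n_{Q}^{(k)}(X, Y) \leq n_{Q_{\lambda_0}}^{(k)}(X_{\lambda_0}, Y_{\lambda_0})$; since $\lambda_0$ is arbitrary, the conclusion follows.
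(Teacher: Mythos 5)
Your proposal is correct and follows exactly the route the paper intends: the paper's own ``proof'' is merely the remark that the argument of Proposition \ref{Prop6.1} carries over to $\ell_p$-sums, and you have carried out that adaptation faithfully. In particular, you correctly isolate and verify the only two points where the $\ell_p$-geometry is actually used, namely the estimate $\|(Q_\lambda(w_\lambda))_{\lambda\neq\lambda_0}\|\leq\|w\|^k$ (via $\sum_\lambda a_\lambda^k\leq(\sum_\lambda a_\lambda)^k$ with $a_\lambda=\|w_\lambda\|^p$) and the H\"older bound $\|y_{\lambda_0}^*\|\|x_{\lambda_0}\|+\|z^*\|\|w\|\leq\|y^*\|\|x\|=1$ coming from the duality $Y^*=Y_{\lambda_0}^*\oplus_q Z^*$.
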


\begin{remark}
The equality in Proposition \ref{Cor6.3} also does not hold in general. 
In fact, consider $1 < p < \infty$, an integer $k \geq p$, and $\ell_p^2$ as $(\mathbb{K}^2, \| \cdot \|_p)$. Define $Q_1, Q_2: \mathbb{K} \to \mathbb{K}$ by $Q_1(a) = a^k = Q_2(a)$. Then, $Q_1, Q_2$ are norm-one $k$-homogeneous polynomials and $n_{Q_1}^{(k)}(\conjK, \conjK) = n_{Q_2}^{(k)}(\conjK, \conjK) = 1$, by Proposition \ref{PropK}. Taking $Q: \ell_p^2 \to \ell_p^2$ as in Proposition \ref{Cor6.3},
we will prove that 
$$n_Q^{(k)}(\ell_p^2, \ell_p^2) = 0 < 1 = \min \left\{ n_{Q_1}^{(k)}(\conjK, \conjK), n_{Q_2}^{(k)}(\conjK, \conjK) \right\}.$$

Given $P \in \mathcal{P}(^k \ell_p^2, \ell_p^2)$ defined by  
$$
P(x_1,x_2) = \left( x_2^k, x_1^k \right),
$$
let us show that $\|P\| = 1$ and $v_Q(P) = 0$. First, observe that
\begin{align*}
    \|P\| & = \sup \left\{ \left\|\left( x_2^k, x_1^k \right)\right\| \colon (x_1, x_2) \in B_{\ell_p^2} \right\}\\
    & = \sup \left\{ (|x_1|^{kp} + |x_2|^{kp})^{1/p} \colon (x_1, x_2) \in B_{\ell_p^2} \right\}\\
    & \leq \sup \left\{ (|x_1|^{p} + |x_2|^{p})^{1/p} \colon (x_1, x_2) \in B_{\ell_p^2} \right\} = 1.
\end{align*}
Besides that, for $x = (1,0) \in B_{\ell_p^2}$, we have $\|P(x)\| = 1$. Therefore, $\|P\| = 1$.

Now, consider $(x, y^*)$ such that $x = (x_1, x_2) \in S_{\ell_p^2}, y^* \in S_{(\ell_p^2)^*}$, and $y^*(Q(x)) = 1.$ Since $(\ell_p^2)^* = \ell_q^2$, where $q$ is the conjugate index of $p$, and $y^* \in S_{(\ell_p^2)^*}$, there is $b = (b_1, b_2) \in S_{\ell_q^2}$ such that $y^*(y_1, y_2) = y_1 b_1 + y_2 b_2$ for every $(y_1, y_2) \in \ell_p^2$. Thus,
$$
y^*(Q(x)) = \sum_{j=1}^{2} x_j^k b_j = 1.
$$
Consequently, 
$$
1 = \left|\sum_{j=1}^{2} x_j^k b_j\right|
\leq \sum_{j=1}^{2} |x_j|^k |b_j| 
\leq \sum_{j=1}^{2} |x_j|^p |b_j|
\leq \sum_{j=1}^{2} |x_j|^p = 1,
$$
that implies
$$
\sum_{j=1}^{2} |x_j|^p |b_j| = 1.
$$
Since $|x_1|^p + |x_2|^p = 1$, we have $|b_j|=1$ if $x_j \neq 0$. But this is only possible if \(x_1 = 0\) or \(x_2 = 0\), since otherwise \((b_1, b_2)\) would have both coordinates of modulus 1, contradicting \(\|(b_1, b_2)\|_q = 1\). In addition, if $x_1 = 0$ then $|x_2|=|b_2|=1$ and $b_1=0$. Analogously, if $x_2 = 0$ then $|x_1|=|b_1|=1$ and $b_2=0$. In both cases, $y^*(P(x)) = 0$. So, $v_Q(P) = 0$ by Proposition \ref{Prop3.4}. 

Therefore, $n_Q^{(2)}{(\ell_p^2, \ell_p^2)} = 0$.
\end{remark}

In the following proposition, we prove that the equality holds for the $c_0$- and $\ell_\infty$-sum of complex Banach spaces. The proof is inspired by \cite[Proposition 2.3]{CGMM08} and \cite[Proposition 6.1]{KMMPQ20}.

\begin{proposition}
    Let $(X_\lambda)_{\lambda \in \Lambda}$, $(Y_\lambda)_{\lambda \in \Lambda}$ be two families of complex Banach spaces, and let $Q_\lambda \in \mathcal{P}(^k X_\lambda, Y_\lambda)$ be a norm-one polynomial for every $\lambda \in \Lambda$. Consider $E$ as $c_0$ or $\ell_\infty$, $X = \left[ \bigoplus_{\lambda \in \Lambda} X_\lambda \right]_E$, and $Y = \left[ \bigoplus_{\lambda \in \Lambda} Y_\lambda \right]_E$. Define the $k$-homogeneous polynomial $Q: X \rightarrow Y$ as in Proposition \ref{Prop6.1}. Then
    $$n_{Q}^{(k)}(X, Y) = \inf_{\lambda} n_{Q_\lambda}^{(k)}(X_\lambda, Y_\lambda).$$
\end{proposition}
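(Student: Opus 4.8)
Since Proposition~\ref{Prop6.1} already gives the inequality $n_Q^{(k)}(X,Y)\le c$, where $c:=\inf_\lambda n_{Q_\lambda}^{(k)}(X_\lambda,Y_\lambda)$, the whole task is the reverse inequality $n_Q^{(k)}(X,Y)\ge c$. Thus the plan is to fix an arbitrary norm-one $P\in\mathcal{P}(^kX,Y)$ and prove $v_Q(P)\ge c$. It is worth recording the equivalent target coming from Proposition~\ref{Prop3.3}: one wants $\max_{\theta\in\mathbb{T}}\|Q+\theta P\|\ge 1+c\,\|P\|$, and since $E$ is $c_0$ or $\ell_\infty$ the ambient norm is a supremum over coordinates, so that
$$\max_{\theta\in\mathbb{T}}\|Q+\theta P\|=\sup_{x\in B_X}\ \sup_{\lambda\in\Lambda}\ \max_{\theta\in\mathbb{T}}\big\|Q_\lambda(x_\lambda)+\theta\,\pi_\lambda(P(x))\big\|,$$
where $\pi_\lambda$ denotes the $\lambda$-th coordinate projection. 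This identity makes the coordinatewise structure explicit and isolates exactly what must be controlled.

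The first genuine step is to localize the dual functionals. For both the $c_0$- and the $\ell_\infty$-sum the set $B=\{\,i_\lambda(y^*_\lambda):\lambda\in\Lambda,\ y^*_\lambda\in S_{Y^*_\lambda}\,\}$ of coordinate functionals lies in $B_{Y^*}$ and is norming for $Y$, since $\|y\|=\sup_\lambda\|y_\lambda\|=\sup_{y^*\in B}\re y^*(y)$; equivalently $\cconv^{w^*}(B)=B_{Y^*}$. Combining this with the fact that, in the complex case, $A=S_X$ is norming for $\mathcal{P}(^kX)$ (the Remark after Proposition~\ref{Lemma3.4}), Proposition~\ref{Lemma3.4} expresses $V(\mathcal{P}(^kX,Y),Q,P)$ as the convex hull of the intersection over $\delta>0$ of the closures of the sets $\{\,y^*_\lambda(\pi_\lambda P(x)):\lambda,\ y^*_\lambda\in S_{Y^*_\lambda},\ x\in S_X,\ \re y^*_\lambda(Q_\lambda(x_\lambda))>1-\delta\,\}$, using $\pi_\lambda(Q(x))=Q_\lambda(x_\lambda)$. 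Since $|\cdot|$ is convex, $\sup|\cdot|$ over a convex hull equals $\sup|\cdot|$ over the generating set, and an argument as in Proposition~\ref{p.20} turns the intersection over $\delta$ into an infimum. Hence the goal reduces to: for each $\delta>0$ produce $\mu\in\Lambda$, $x=(x_\lambda)_\lambda\in S_X$ and $y^*_\mu\in S_{Y^*_\mu}$ with $\re y^*_\mu(Q_\mu(x_\mu))>1-\delta$ and $|y^*_\mu(\pi_\mu P(x))|\ge c-o_\delta(1)$.

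To bring in the hypothesis, consider for each coordinate the single-variable restriction $R_\mu:=\pi_\mu\circ P\circ i_\mu\in\mathcal{P}(^kX_\mu,Y_\mu)$. Embedding $x=i_\mu(x_\mu)$ and $y^*=i_\mu(y^*_\mu)$ shows $v_{Q_\mu}(R_\mu)\le v_Q(P)$, and applying $n_{Q_\mu}^{(k)}\ge c$ to $R_\mu/\|R_\mu\|$ (when $R_\mu\ne0$) yields $v_Q(P)\ge c\,\|R_\mu\|$, hence $v_Q(P)\ge c\,\sup_\mu\|R_\mu\|$. This already settles the statement whenever the diagonal restrictions recover the norm of $P$, i.e. $\sup_\mu\|R_\mu\|=\|P\|$.

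The main obstacle is precisely that $\sup_\mu\|R_\mu\|$ can be strictly smaller than $\|P\|$: for instance $P(x_1,x_2)=(x_1x_2,0)$ on $\ell_\infty^2$ has every $R_\mu=0$ while $\|P\|=1$, so the diagonal estimate is useless and the norm of $P$ is carried entirely by off-diagonal (cross-coordinate) terms. The plan to overcome this — and the point where both the complex scalars and the supremum norm of the $c_0$/$\ell_\infty$ sum are indispensable — is a maximum-modulus/rotation argument in the free coordinates. The decisive feature is that the constraint $\re y^*_\mu(Q_\mu(x_\mu))>1-\delta$ binds only the single input coordinate $x_\mu$, leaving the inputs $(x_\nu)_{\nu\ne\mu}$ free in the unit ball of $W=\big[\bigoplus_{\nu\ne\mu}X_\nu\big]_E$; exploiting the rotation torus $\mathbb{T}^{\Lambda}$ (a group of surjective isometries of $X$ and $Y$ for which $Q$ is equivariant, since $Q$ is coordinatewise and $k$-homogeneous) and averaging $\pi_\mu P$ against its characters, one can realize a value of modulus close to the full norm of $P$ while holding a $Q_\mu$-norming configuration $(x_\mu,y^*_\mu)$ fixed. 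I expect this transfer to be the hard part of the proof; it is exactly the mechanism that collapses for $\ell_1$- and $\ell_p$-sums, where the norm couples the coordinates and the off-coordinate inputs cannot be boosted independently, in agreement with the counterexamples in the two preceding remarks. Feeding the resulting norm-one single-coordinate data into $n_{Q_\mu}^{(k)}\ge c$ and letting $\delta\downarrow0$ then gives $v_Q(P)\ge c$, and taking the infimum over norm-one $P$ completes the equality.
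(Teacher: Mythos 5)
Your setup and your diagnosis of the obstacle are both correct: the inequality $n_Q^{(k)}(X,Y)\le c$ is Proposition \ref{Prop6.1}, the diagonal restrictions $R_\mu=\pi_\mu\circ P\circ i_\mu$ can all vanish while $\|P\|=1$, and so the easy estimate $v_Q(P)\ge c\,\sup_\mu\|R_\mu\|$ proves nothing in general. But the proposal stops exactly where the real proof has to begin. The ``maximum-modulus/rotation argument in the free coordinates'' is never carried out, and as described it cannot close the gap: the hypothesis $n^{(k)}_{Q_\mu}(X_\mu,Y_\mu)\ge c$ is a statement about polynomials in $\mathcal{P}(^kX_\mu,Y_\mu)$, so to invoke it you must manufacture from $P$ a $k$-homogeneous polynomial on the \emph{single} coordinate space $X_\mu$ whose norm is close to $\|P\|$ and whose numerical radius with respect to $Q_\mu$ is dominated by $v_Q(P)$. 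Averaging $\pi_\mu\circ P$ against characters of $\mathbb{T}^\Lambda$ does not produce such an object: $P$ is not equivariant under the torus action (only $Q$ is), the averaging merely splits $\pi_\mu\circ P$ into multihomogeneous components, and there is no evident way to bound $\|P\|$ by the value of any such component at a configuration whose $\mu$-th input coordinate is $Q_\mu$-norming. Moreover, your ``reduced goal'' (for each $\delta>0$ produce $\mu$, $x\in S_X$, $y^*_\mu\in S_{Y^*_\mu}$ with $\re y^*_\mu(Q_\mu(x_\mu))>1-\delta$ and $|y^*_\mu(\pi_\mu P(x))|\ge c-o_\delta(1)$) is essentially a restatement of $v_Q(P)\ge c$ itself, since the coordinate functionals are norming; nothing has been gained at that point.

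The missing idea is a \emph{section} construction, and it is the heart of the paper's proof. Since $E$ is $c_0$ or $\ell_\infty$, write $P$ as a family $(P_\lambda)_\lambda$ with $P_\lambda\in\mathcal{P}(^kX,Y_\lambda)$ (domain all of $X$, not $X_\lambda$) and $\|P\|=\sup_\lambda\|P_\lambda\|$; choose $\lambda_0$ with $\|P_{\lambda_0}\|>\|P\|-\varepsilon$ and a point $(x_0,w_0)\in S_X$, $X=X_{\lambda_0}\oplus_\infty W$, with $\|P_{\lambda_0}(x_0,w_0)\|>\|P\|-\varepsilon$. The Maximum Modulus Theorem applied to the entire function $z\mapsto x^*_{\lambda_0}(P_{\lambda_0}(zx_1,w_0))$ lets one assume $\|x_0\|=1$ (this is the only place where complex scalars are used, and it is exactly what fails in the real counterexample following the proposition). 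Now pick $x_0^*\in S_{X_{\lambda_0}^*}$ with $x_0^*(x_0)=1$ and define
$$R(x)=P_{\lambda_0}\bigl(x,\,x_0^*(x)\,w_0\bigr)\qquad(x\in X_{\lambda_0}).$$
Since $x\mapsto(x,x_0^*(x)w_0)$ is linear, $R\in\mathcal{P}(^kX_{\lambda_0},Y_{\lambda_0})$; it captures the cross-terms because $R(x_0)=P_{\lambda_0}(x_0,w_0)$, so $\|R\|>\|P\|-\varepsilon$. And for any $u\in S_{X_{\lambda_0}}$, $v^*\in S_{Y^*_{\lambda_0}}$ with $\re v^*(Q_{\lambda_0}(u))>1-\delta$, the point $x=(u,x_0^*(u)w_0)$ lies in $S_X$ (here the $\ell_\infty$-type norm of the sum is essential) and $y^*=(v^*,0)$ satisfies $\re y^*(Q(x))>1-\delta$ and $y^*(P(x))=v^*(R(u))$; hence $v_{Q_{\lambda_0},\delta}(R)\le v_{Q,\delta}(P)$ for every $\delta>0$. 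Then $v_Q(P)\ge v_{Q_{\lambda_0}}(R)\ge n^{(k)}_{Q_{\lambda_0}}(X_{\lambda_0},Y_{\lambda_0})\,\|R\|\ge c\,(\|P\|-\varepsilon)$, and letting $\varepsilon\downarrow 0$ gives $v_Q(P)\ge c\,\|P\|$. Note the logic here: the index hypothesis on the coordinate pair \emph{supplies} the norming configuration for $R$; one does not need to construct it by hand, which is what your plan was attempting and could not do.
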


\begin{proof}
    We already proved in the previous proposition that
    $$n_{Q}^{(k)}(X, Y) \leq \inf_{\lambda} n_{Q_\lambda}^{(k)}(X_\lambda, Y_\lambda).$$
    Thus, it suffices to establish the reverse inequality.
    
    Since $E$ is $c_0$ or $\ell_\infty$, a polynomial $P \in \mathcal{P}(^k X, Y)$ can be seen as a family $(P_\lambda)_{\lambda \in \Lambda}$, where $P_\lambda \in \mathcal{P}(^k X, Y_\lambda)$ for every $\lambda$, and $\|P\| = \sup \{ \|P_\lambda\| : \lambda \in \Lambda \}$. Given $\varepsilon > 0$, there is $\lambda_0 \in \Lambda$ such that $\|P_{\lambda_0}\| > \|P\| - \varepsilon$. Write $X = X_{\lambda_0} \oplus_\infty W$, where $W=\left[ \bigoplus_{\lambda \neq \lambda_0} X_\lambda \right]_E$. Choose $(x_0,w_0) \in S_X$ such that $x_0 \in X_{\lambda_0}$, $w_0 \in W$, and $\|P_{\lambda_0}(x_0,w_0)\| > \|P\| - \varepsilon$. Let us show that we may assume $\|x_0\| = 1$. Indeed, consider $x_1 \in S_{X_{\lambda_0}}$ such that $\|x_0\| x_1 = x_0$, and choose $x_{\lambda_0}^* \in S_{X_{\lambda_0}^*}$ satisfying
    $$|x_{\lambda_0}^*(P_{\lambda_0}(x_0,w_0))| > \|P\| - \varepsilon.$$
    Define the entire function $g : \conjC \rightarrow \conjC$ by
    $$g(z) = x_{\lambda_0}^*(P_{\lambda_0}(zx_1,w_0))$$
    for all $z \in \conjC$. By the Maximum Modulus Theorem, we have
    \begin{align*}
    \|P\| - \varepsilon & < |x_{\lambda_0}^*(P_{\lambda_0}(x_0,w_0))| = |g(\|x_0\|)| \leq |g(z_0)| \\
    & = |x_{\lambda_0}^*(P_{\lambda_0}(z_0 x_1,w_0))| \leq \|P_{\lambda_0}(z_0 x_1,w_0)\|
    \end{align*}
    for some $z_0 \in \conjC$ with $|z_0| = 1$. So, replacing $x_0$ by $z_0x_1$, the claim is done. 
    
    Now fix $x_0^* \in S_{X_{\lambda_0}^*}$ with $x_0^*(x_0) = 1$, and define the polynomial $R \in \mathcal{P}(^k X_{\lambda_0}, Y_{\lambda_0})$ by
    $$R(x) = P_{\lambda_0}(x, x_0^*(x)w_0)$$
    for every $x \in X_{\lambda_0}$. Then,
    $$\|R\| \geq \|R(x_0)\| = \|P_{\lambda_0}(x_0, x_0^*(x_0)w_0)\| = \|P_{\lambda_0}(x_0, w_0)\| > \|P\| - \varepsilon.$$
    Given $\delta > 0$, we claim that $v_{Q_{\lambda_0}, \delta}(R) \leq v_{Q, \delta}(P)$. Indeed, let $u \in S_{X_{\lambda_0}}$ and $v^* \in S_{Y^*_{\lambda_0}}$ be such that $\re v^*(Q_{\lambda_0}(u)) > 1 - \delta$. Take
    $$x = (u, x_0^*(u)w_0) \in S_X, \ \ y^* = (v^*, 0) \in S_{Y^*}.$$
    Observe that
    $$\re y^* (Q(x)) = \re v^*(Q_{\lambda_0}(u)) > 1 - \delta.$$
    Hence,
    $$|v^*(R(u))| = |v^*(P_{\lambda_0}(u, x_0^*(u)w_0))| = |y^*(P(x))| \leq v_{Q, \delta}(P).$$
    Thus, we deduce $v_{Q_{\lambda_0}, \delta}(R) \leq v_{Q, \delta}(P)$. Consequently,
    $$v_Q(P) \geq v_{Q_{\lambda_0}}(R) \geq n_{Q_{\lambda_0}}^{(k)}(X_{\lambda_0}, Y_{\lambda_0}) \|R\| \geq n_{Q_{\lambda_0}}^{(k)}(X_{\lambda_0}, Y_{\lambda_0}) [\|P\| - \varepsilon].$$
    Therefore,
    $$v_Q(P) \geq \inf_{\lambda} n_{Q_\lambda}^{(k)}(X_\lambda, Y_\lambda)\|P\|,$$
    and so, $n_{Q}^{(k)}(X, Y) \geq \inf_{\lambda} n_{Q_\lambda}^{(k)}(X_\lambda, Y_\lambda)$.
\end{proof}

\begin{remark}
The previous proposition does not hold in the real case. Denote $(\mathbb{R}^2, \| \cdot \|_\infty)$ by $\ell^2_\infty$. Define $Q_1, Q_2: \conjR \to \conjR$ by $Q_1(a) = a^3 = Q_2(a)$. 
Thus, $Q_1, Q_2$ are norm-one $3$-homogeneous polynomials, and $n_{Q_1}^{(3)}(\conjR, \conjR) = n_{Q_2}^{(3)}(\conjR, \conjR) = 1$, by Proposition \ref{PropK}. Assuming $Q: \ell_\infty^2 \to \ell_\infty^2$ as in Proposition \ref{Prop6.1}, we will show that
$$n_Q^{(3)}(\ell^2_\infty, \ell^2_\infty) \leq \frac{2}{3\sqrt{3}} < 1 = \min \left\{ n_{Q_1}^{(3)}(\conjR, \conjR), n_{Q_2}^{(3)}(\conjR, \conjR) \right\}.$$

Let $P \in \mathcal{P}(^3\ell^2_\infty; \ell^2_\infty)$ be given by 
$$P(x_1,x_2) = (x_1^2x_2 - x_2^3, 0).$$ We will verify that $\|P\| = 1$ and $v_Q(P) \leq \frac{2}{3\sqrt{3}}$. For every $(x_1,x_2) \in \ell^2_\infty$ with $\|(x_1,x_2)\|_\infty\leq 1$, 
$$    \|P(x_1,x_2)\|_\infty = \|(x_1^2x_2 - x_2^3, 0)\|_\infty = |x_1^2x_2 - x_2^3| \leq |x_1^2 - x_2^2| \leq 1.
$$
Then, $\|P\| \leq 1$. Taking $x=(0,1) \in B_{\ell^2_\infty}$, we obtain $\|P(x)\| = 1$. Hence, $\|P\| = 1$.

Next, let $x = (x_1,x_2) \in S_{\ell_\infty^2}$ and $y^* \in S_{(\ell_\infty^2)^*}$ be such that $y^*(Q(x)) = 1$. Since $(\ell_\infty^2)^* = \ell^2_1$, there exists $b = (b_1, b_2) \in S_{\ell^2_1}$ such that $y^*(y_1, y_2) = y_1b_1+y_2b_2$. Therefore, $$y^*(Q(x)) = \sum_{j=1}^2 x_j^3b_j = 1.$$
We now compute \(|y^*(P(x))|\) by analyzing three separate cases:

\textbf{Case 1:} Suppose that $|x_1| = 1$ and $|x_2| = 1$. Then
$$
P(x) = (x_1^2x_2 - x_2^3, 0) = (x_2 (x_1^2 - x_2^2), 0) = (0,0),
$$
so $|y^*(P(x))| = 0$.

\textbf{Case 2:} Suppose that $|x_1| = 1$ and $|x_2| < 1$. Thus,
$$
\sum_{j=1}^2 |b_j| = 1 = \left| \sum_{j=1}^2 x_j^3b_j \right| \leq |b_1| + |x_2||b_2|,
$$
that implies $|b_1| = 1$ and $|b_2|=0$. Hence,
\begin{align*}
    |y^*(P(x))| = |(x_1^2 x_2 - x_2^3)b_1| = |x_2-x_2^3|.
\end{align*}

\textbf{Case 3:} Suppose that $|x_1| < 1$ and $|x_2| = 1$. Analogously, we obtain \(|b_1| = 0\), \(|b_2| = 1\), and thus, $|y^*(P(x))| = 0$. 

Consequently, by Proposition \ref{Prop3.4},
$$ v_Q(P) = \sup \{|x_2 - x_2^3| : |x_2| < 1\} = \frac{2}{3\sqrt{3}}.$$
Therefore,
$$
n_Q^{(2)} (\ell_\infty^2, \ell^2_\infty) \leq \frac{2}{3\sqrt{3}}.
$$ 
\end{remark}

\section*{Disclosure statement}

The authors report there are no competing interests to declare.

\normalsize


\begin{thebibliography}{[HD82]}

\normalsize
\baselineskip=17pt

\bibitem{A14}
M. A. Ardalani, \textit{Numerical index with respect to an operator}, Studia Math. \textbf{224} (2014), 165--171.

\bibitem{B62}
F. L. Bauer, \textit{On the field of values subordinate to a norm}, Numer. Math. \textbf{4} (1962), 103--111.

\bibitem{BCS68}
F. F. Bonsall, B. E. Cain, H. Schneider, \textit{The numerical range of a continuous mapping of a normed space}, Aequationes Math. \textbf{2} (1969), 86--93.

\bibitem{BD71}
F. F. Bonsall, J. Duncan, \textit{Numerical Ranges of Operators on Normed Spaces and of Elements of Normed Algebras}, London Math. Soc. Lecture Note Ser. \textbf{2}, Cambridge, 1971.

\bibitem{BD73}
F. F. Bonsall, J. Duncan, \textit{Numerical Ranges II}, London Math. Soc. Lecture Note Ser. \textbf{10}, Cambridge, 1973.

\bibitem{BKMW07}
K. Boyko, V. Kadets, M. Mart\'in, D. Werner, \textit{Numerical index of Banach spaces and duality}, Math. Proc. Cambridge Phil. Soc. \textbf{142} (2007), 93--102.

\bibitem{CGKM06}
Y. S. Choi, D. Garc\'ia, S. G. Kim, M. Maestre, \textit{The polynomial numerical index of a Banach space}, Proc. Edinb. Math. Soc. \textbf{49} (2006), 39--52.

\bibitem{CGMM08}
Y. S. Choi, D. Garc\'ia, M. Maestre, M. Mart\'in, \textit{The polynomial numerical index for some complex vector-valued function spaces}, Q. J. Math. \textbf{59} (2008), 455--474.

\bibitem{D99}
S. Dineen, \textit{Complex Analysis on Infinite Dimensional} \textit{Spaces}, Springer-Verlag, London, 1999.

\bibitem{DMPW70}
J. Duncan, C. M. McGregor, J. D. Pryce, A. J. White, \textit{The numerical index of a normed space}, J. London Math. Soc. \textbf{2} (1970), 481--488.

\bibitem{GM14}
D. Garc\'ia, M. Maestre, \textit{Some non-linear geometrical properties of Banach spaces}, Descriptive Topology and Functional Analysis, Springer Proceedings in Mathematics \& Statistics \textbf{80} (2014), 209--240.

\bibitem{H71}
L. A. Harris, \textit{The numerical range of holomorphic functions in Banach spaces}, Amer. J. Math. \textbf{93} (1971), 1005--1019.

\bibitem{KMMP18}
V. Kadets, M. Mart\'in, J. Mer\'i, A. P\'erez, \textit{Spear Operators between Banach Spaces}, Lecture Notes in Math. \textbf{2205}, Springer, Cham, 2018.

\bibitem{KMMPQ20}
V. Kadets, M. Mart\'in, J. Mer\'i, A. P\'erez, A. Quero, \textit{On the numerical index with respect to an operator}, Diss. Math. \textbf{547} (2020), 1--58.

\bibitem{L61}
G. Lumer, \textit{Semi-inner Product Spaces}, Trans. Am. Math. Soc. \textbf{100} (1961), 29--43.

\bibitem{M16}
M. Mart\'in, \textit{On different definitions of numerical range}, J. Math. Anal. Appl. \textbf{433} (2016), 877--886.

\bibitem{MP00}
M. Mart\'in, R. Pay\'a, \textit{Numerical index of vector-valued function spaces}, Studia Math. \textbf{142} (2000), 269--280.

\bibitem{M10}
J. Mujica, \textit{Complex Analysis in Banach Spaces}, Dover Publ. Inc., Mineola, New York, 2010.

\bibitem{T18}
O. Toeplitz, \textit{Das algebraische Analogon zu einem Satze von Fejer}, Math. Z. \textbf{2} (1918), 187--197. 

\end{thebibliography}
\end{document}